\journal{Computer Methods in Applied Mechanics and Engineering}
\definecolor{dgreen}{rgb}{0,0.6,0.1}
\definecolor{dred}{rgb}{0.8,0,0}
\definecolor{dmagenta}{rgb}{0.6,0,0.6}
\definecolor{dmag}{rgb}{0.8,0,0.9}
\definecolor{dblue}{rgb}{0,0,0.7}
\definecolor{dbrown}{rgb}{0.8,0.25,0.25}
\definecolor{doran}{rgb}{1,0.4,0}
\newtheorem{thm}{Theorem}[section]
\newtheorem{prop}[thm]{Proposition}
\newtheorem{remark}{Remark}
\newcommand\Om{{\Omega}}
\newcommand\E{{\PP}}
\newcommand\bgrad{\textrm{\bf grad}}
\renewcommand\div{{\rm div}}
\newcommand\bcurl{\textrm{\bf curl}}
\newcommand\curl{\textrm{\bf curl}}
\newcommand\brot{\textrm{\bf rot}}
\newcommand\rot{{\rm rot}}
\newcommand{\dO}{\,{\rm d}\Omega}
\newcommand{\dE}{\,{\rm d}\E}
\newcommand{\dPP}{\,{\rm d}\PP}
\newcommand{\ds}{\,{\rm d}s}
\newcommand{\dS}{\,{\rm d}S}
\newcommand{\de}{\,{\rm d}e}
\newcommand{\df}{\,{\rm d}f}
\newcommand\nn{\boldsymbol n}
\renewcommand\tt{\boldsymbol t}
\newcommand\xxP{{\boldsymbol x}_{\PP}}
\newcommand\xxf{{\boldsymbol x}_{f}}
\newcommand\qq{\boldsymbol q}
\newcommand\bpsi{\boldsymbol \psi}
\newcommand\bphi{\boldsymbol \varphi}
\newcommand\vv{\boldsymbol v}
\renewcommand\qq{\boldsymbol q}
\newcommand\pp{\boldsymbol p}
\newcommand\HH{\boldsymbol H}
\newcommand\BB{\boldsymbol B}
\newcommand\jj{\boldsymbol j}
\newcommand\xx{\boldsymbol x}
\newcommand\ww{\boldsymbol w}
\renewcommand{\O}{ {\mathcal O}}
\newcommand\Th{{\mathcal T}_h}
\newcommand\R{\mathbb{R}}
\renewcommand{\P}{ {\mathbb P}}
\newcommand\PP{{\text P}}
\newcommand\tbn{|\!|\!|}
 \numberwithin{equation}{section}
\newcommand{\nodal}{\rm{node}}
\newcommand{\edge}{\rm{edge}}
\newcommand{\face}{\rm{face}}
\newcommand{\RT}{RT_0}
\newcommand{\ND}{N_0}
\definecolor{dgreen}{rgb}{0,0.6,0.1}
\definecolor{dred}{rgb}{0.8,0,0}
\definecolor{dmagenta}{rgb}{0.6,0,0.6}
\definecolor{dmag}{rgb}{0.8,0,0.9}
\definecolor{dblue}{rgb}{0,0,0.7}
\definecolor{dbrown}{rgb}{0.8,0.25,0.25}
\definecolor{doran}{rgb}{1,0.4,0}
\begin{document}
% ------------------------------------------------------------------------------------------------------------
\begin{frontmatter}

\title{Lowest order Virtual Element approximation of magnetostatic problems}

\author[unimib,imati]{L. Beir\~ao da Veiga}
\ead{lourenco.beirao@unimib.it}

\author[imati]{F. Brezzi\corref{correspondingauthor}}
\ead{brezzi@imati.cnr.it}

\author[unimib]{F. Dassi}
\ead{franco.dassi@unimib.it}

\author[unipv,imati]{L.D. Marini}
\ead{marini@imati.cnr.it}

\author[unimib,imati]{A. Russo}
\ead{alessandro.russo@unimib.it}

\address[unimib]{Dipartimento di Matematica e Applicazioni, Universit\`a di 
Milano--Bicocca, Via Cozzi 53, I-20153, Milano, Italy}
\address[imati]{IMATI CNR, Via Ferrata 1, I-27100 Pavia, Italy}
\address[unipv]{Dipartimento di Matematica, Universit\`a di Pavia,
Via Ferrata 5, I-27100 Pavia, Italy}
\cortext[correspondingauthor]{Corresponding author}

\begin{abstract}We give here a simplified presentation of the lowest order Serendipity Virtual Element method, and show its use for the numerical solution of linear magneto-static problems in three dimensions.  The method can be applied to very general decompositions of the computational domain (as is natural for Virtual Element Methods) and uses as unknowns the (constant) tangential component of the magnetic field $\HH$ on each edge, and the vertex values of the Lagrange multiplier $p$ (used to enforce the solenoidality of the magnetic induction $\BB=\mu\HH$).  In this respect the method  can be seen as the natural generalization of the lowest order Edge Finite Element Method (the so-called ``first kind N\'ed\'elec'' elements) to polyhedra of almost arbitrary shape, and as we show on some numerical examples it  exhibits very good accuracy (for being a lowest order element) and excellent robustness with respect to distortions.
\end{abstract}

\begin{keyword}
Finite Element Methods \sep Virtual Element Methods \sep Magnetostatic problems \sep Serendipity.

AMS Subject Classification: 65N30
\end{keyword}

\end{frontmatter}

\section{Introduction} In this paper we introduce a simplified version of  the Serendipity Virtual Element Methods
(VEMs)  presented in \cite{SERE-nod} and \cite{SERE-mix} and we show 
how they can be used for the numerical solution of linear magneto-static problems in the so-called Kikuchi formulation (see e.g. \cite{KikuchiIEEE}).  

Serendipity VEMs are a recent variant of Virtual Element Methods that allow (as is the case of classical Serendipity Finite Elements (FEMs) on quadrilaterals and hexahedra) to eliminate a certain number of  degrees of freedom (internal to faces and volumes) without compromising the order of accuracy.  In the Virtual Element framework they are particularly useful since the original formulations of VEMs (as in \cite{volley} or \cite{super-misti}) often use more degrees of freedom than their FEM counterpart (when it exists). 

The advantage of VEMs, when it comes to Serendipity variants, is that, contrary to FEMs, they do not use a reference element: an inevitable sacrifice, if you want to be able to deal with very general geometries. Such a sacrifice, that requires additional computations on the current
element, has however the advantage of  being much more robust with respect to distortions, whereas Serendipity FEMs can lose orders of accuracy already on innocent quadrilaterals 
that are not parallelograms (as is well known, and has been analyzed e.g. in \cite{A-B-F-nod}, \cite{A-B-F-Quads}).

Here, as we said,  we present a variant of the general theories of \cite{SERE-nod} and \cite{SERE-mix}, that is  specially designed for {\it lowest order} cases and comes out to be simpler, both for the theoretical presentation and the practical implementation.

 Then we apply  it to  the classical {\it model} magnetostatic problem, in a {\it smooth-enough} {simply connected} bounded  domain $\Om$ in $\R^3$:
\begin{equation}
\left\{
\begin{aligned}\label{Max3}
& \mbox{given }\jj \in H(\div;\Om) \quad(\mbox{with }\div \jj =0 \mbox{ in }\Om),~\mbox{ and }\mu=\mu(\xx)\ge \mu_0>0,\\
& \mbox {find  }\HH\in H(\bcurl;\Om) \mbox{ and }\BB\in H(\div;\Om) \mbox{ such that: }\\
& \bcurl \HH=\jj  \mbox{ and }\div\BB=0, \mbox{ with }\BB=\mu\HH \mbox{ in }\Om\\
&\mbox{with the boundary conditions } \HH\wedge\nn =0\mbox{ on }\partial\Om . 
\end{aligned}
\right.
\end{equation}

In particular we shall deal with the 
variational formulation introduced  in
\cite{KikuchiIEEE}, that reads
\begin{equation}\label{K1_3}
\left\{
\begin{aligned}
%& \mbox{given }\jj \in H(\div;\Om) \quad(\mbox{with }\div \jj =0 \mbox{ in }\Om),\quad\mbox{ and }\mu\in\R,\\
& \mbox {find  }\HH\in H_0(\bcurl;\Om) \mbox{ and }p\in H^1_0(\Om) \mbox{ such that: }\\
& \int_{\Om}\bcurl\HH\cdot\bcurl\vv\dO+\int_{\Om}\nabla p\cdot\mu\vv\dO=\int_{\Om}\jj\cdot\bcurl\vv\dO
\quad\forall\vv\in H_0(\bcurl;\Om)\\
& \int_{\Om}\nabla q\cdot\mu\HH\dO=0\quad\forall q\in H^1_0(\Om).\\ 
\end{aligned}
\right.
\end{equation}

For many other different approaches to the same problem see e.g. \cite{Monk-Maxw}, \cite{DemkoDG-Max-2}, \cite{Bermudez-Book} and the references therein.

In our discretization, the scalar variable $p$ (Lagrange multiplier for the condition $\div(\mu\HH)=0$) will be discretized using only vertex values as degrees of freedom, and the magnetic field $\HH$ will be discretized using only one degree of freedom (= constant tangential component)  per edge.  In its turn the current $\jj$ (here a given quantity)  will be discretized by its  lowest order {\it Face Virtual Element interpolant ${\jj}_I$}, individuated by its constant normal component on each face.

On tetrahedrons  this would correspond to use a piecewise linear scalar for $p$, a lowest-order 
N\'ed\'elec  of  the {\it first kind} for $\HH$, and a lowest order Raviart-Thomas for $\jj$: in a sense, nothing new. But already on prisms, pyramids, or hexahedra we start gaining, as we can allow  more general geometries and more dramatic distortions, and there are no difficulties in using much more general polyhedrons.
 
 On polyhedrons the present approach could also be seen as  being close to
previous works on Mimetic Finite Differences (the ancestor of Virtual Elements) like \cite{Brezzi:Buffa:2010} or \cite{Maxwell-MFD}. Here however the approach is more simple and direct, allowing a 
 thorough  analysis of convergence properties. Also the use of an explicit stabilizing term, reminiscent of Hybrid Discontinuous Galerkin methods (see e.g.  \cite{Cockburn-DiPietro-Ern} and the references therein) contributes, in our opinion, to the user-friendliness of the presentation.

A layout of the paper is as follows. The next section will be dedicated to recall the basic notation of functional spaces and differential operators.

Then in Section \ref{2D}  we will introduce and discuss the two-dimensional VEMs ({\it nodal} and {\it edge}) that will be used on the faces of the three-dimensional decompositions. As usual, we will present first the spaces on a single two dimensional element ({\it local spaces}).

In Section \ref{3D} we will finally present our ``Simplified Serendipity Spaces''  in three dimensions. We first deal with a single element (polyhedron) and then discuss the 
spaces on a general decomposition.

In Section \ref{MS} we will use these spaces to discretize the linear magneto-static problem, and briefly discuss their convergence and the a-priori error analysis.

Finally, in Section \ref{numres} we will present some numerical results.

\section{Notation}\label{NOT}

 In any dimension, for an integer $s\ge -1$ we will denote by $\P_s$ the space of polynomials of degree $\le s$.
	Following a common convention, $\P_{-1}\equiv\{0\}$ and $\P_0\equiv\R$. Moreover, {$\Pi_{s,\O}$ will denote the $L^2(\O)$-orthogonal projection onto $\P_s$ (or $(\P_s)^2$, or  $(\P_s)^3$). When no confusion can occur, this will be simply denoted by $\Pi_s$.}

\subsection{Basic notation in 2 and 3 dimensions}

In two and three dimensions we will denote by $\xx$  the indipendent variable. 
In two dimensions we will also use $\xx=(x,y)$
or (more often) $\xx=(x_1,x_2)$ following the circumstances. We will also use
\begin{equation}\label{xperp}
\xx^{\perp}:=(-x_2,x_1)
\end{equation}
In two dimensions, for a vector $\vv$ and a scalar $q$ we will write 
\begin{equation}\label{rotebrot}
\displaystyle{\rot \vv:=\frac{\partial v_2}{\partial x}-\frac{\partial v_1}{\partial y}},
\qquad \brot \,q:=(\frac{\partial q}{\partial y}, -\frac{\partial q}{\partial x})^T .
\end{equation}
We observe that
\begin{equation}\label{seserve}
\div (p_0 \xx)=\rot (p_0 \xx^{\perp})=2 p_0,\qquad \forall p_0\in \R .
\end{equation}
The following decompositions of polynomial vector spaces are well known and will be useful in what follows. In two dimensions we have, for all $s\ge 0$:
\begin{equation}\label{decoPs}
(\P_s)^2=\brot \P_{s+1}\oplus \xx \P_{s-1} \quad\mbox{ and }\quad(\P_s)^2=\bgrad \P_{s+1}\oplus \xx^{\perp} \P_{s-1}.
\end{equation}
%
%implying that 
% \begin{equation}\label{isodiv}
%\mbox{   $\div$ is an isomorphism between  $\Big\{\xx\P_s\}$ and $\P_s$}
%\end{equation}
% \begin{equation}\label{isorot}
%\mbox{   $\rot$ is an isomorphism between  $\Big\{\xx^{\perp}\P_s\}$ and $\P_s$}.
%\end{equation}
In three dimensions the analogues of \eqref{decoPs} (always for all $s\ge 0$) are
\begin{equation}\label{decompPs3D}
(\P_s)^3=\bcurl((\P_{s+1})^3) \oplus\xx\P_{s-1},\quad\mbox{ and }\quad
%\end{equation}
%\begin{equation}\label{decompPk3gxw}
(\P_s)^3=\bgrad(\P_{s+1}) \oplus\xx\wedge(\P_{s-1})^3.
\end{equation}
We also note that by direct computation we have, {similarly to  \eqref{seserve}}: 
 \begin{equation}\label{isodiv-3}
{\div (p_0 \xx)=3 p_0\quad \forall  p_0 \in \P_0 \qquad\mbox{ and}\qquad\bcurl ( {\boldsymbol p}_0\wedge\xx)=2 {\boldsymbol p}_0}\quad \forall {\boldsymbol p}_0 \in (\P_0)^3.
\end{equation}
Finally, on a polyhedron $\PP$ we set $\xx_{\PP} = \xx-{\bf b}_{\PP}$ with ${\bf b}_P$ the barycenter of $\PP$. Analogously, for each face $f \in \partial \PP$, we set $\xxf=\xx-{\bf b}_{f}$, with ${\bf b}_{f}=$ barycenter of $f$. Note that clearly 
\begin{equation}\label{xxpmed0}
\int_{\PP}\xx_{\PP}\dPP=0,
\end{equation}
as well as
\begin{equation}\label{xxfmed0}
\int_{f}\xx_f\df=0 .
\end{equation}

\subsection{Polynomial spaces: Raviart-Thomas and N\'ed\'elec}
We recall the definition of the classical lowest order Raviart-Thomas {\it local} spaces in $d$ space dimensions
\begin{equation}\label{RT}
\RT = (\P_0)^d \oplus \xx \: \P_0 ,
\end{equation}
and also the classical lowest order N\'ed\'elec first-type {\it local} spaces in two and three space dimensions
\begin{equation}\label{ND}
\ND = (\P_0)^2 \oplus \xx^\perp \: \P_0 
\quad \textrm{ or } \quad
\ND = (\P_0)^3 \oplus \xx\wedge(\P_{0})^3 .
\end{equation}
In what follows, when dealing with the {\it faces} of a polyhedron (or of a polyhedral decomposition) we shall use two-dimensional differential operators that act on the restrictions to faces of scalar functions that are defined on a three-dimensional domain. Similarly, for vector valued functions we will use two-dimensional differential operators that act on the restrictions to faces of the tangential components.  In many cases, no confusion will be likely to occur; however, to stay on the safe side, we will often use a superscript $\tau$  to denote the tangential components of a three-dimensional vector, and a subscript $f$ to indicate the two-dimensional differential operator. Hence, to fix ideas, if a face has equation
$x_3=0$ then $\xx^{\tau}:=(x_1,x_2)$ and, say, $\div_f\vv^{\tau}:=\frac{\partial v_1}{\partial x_1}+
\frac{\partial v_2}{\partial x_2}$.

\subsection{Some Functional Spaces}

We recall some commonly used functional spaces: 
\begin{align*}
&H(\div;\Om)=\{\vv\in [L^2(\Om)]^3 \mbox{ with } \div \vv \in L^2(\Om)\},\\ 
&H_0(\div;\Om)=\{\bphi\in H(\div;\Om)\mbox { with } \bphi\cdot\nn=0 \mbox{ on }\partial\Om\},\\ 
&H(\curl;\Om)=\{\vv\in [L^2(\Om)]^3 \mbox{ with } \curl \vv \in [L^2(\Om)]^3\},\\ 
&H_0(\curl;\Om)=\{\vv \in H(\curl;\Om) \mbox{ with } \HH\wedge\nn =0 \mbox{ on } \partial \Om \},\\
&H^1(\Om)=\{q\in L^2(\Om) \mbox{ with } \bgrad q \in (L^2(\Om))^2\},\\
&H^1_0(\Om)=\{q\in H^1(\Om) \mbox{ with } q=0 \mbox{ on } \partial\Om\}.
\end{align*}

%\subsection{Variational formulation of the Magnetostatic problem}

%We recall that, for a vector-valued function $\HH\in H(\bcurl;\Om)$, 
%the boundary  condition $\HH\wedge\nn =0\mbox{ on }\partial\Om$ is usually expressed as $\HH\in H_0(\bcurl;\Om)$.
%\medskip

%Here we will deal with the variational formulation introduced  in
%\cite{KikuchiIEEE}, that reads
%
%
%\begin{equation}\label{K1_3}
%\left\{
%\begin{aligned}
%& \mbox {find  }\HH\in H_0(\bcurl;\Om) \mbox{ and }p\in H^1_0(\Om) \mbox{ such that: }\\
%& \int_{\Om}\bcurl\HH\cdot\bcurl\vv\dO+\int_{\Om}\nabla p\cdot\mu\vv\dO=\int_{\Om}\jj\cdot\bcurl\vv\dO
%\quad\forall\vv\in H_0(\bcurl;\Om)\\
%& \int_{\Om}\nabla q\cdot\mu\HH\dO=0\quad\forall q\in H^1_0(\Om).\\ 
%\end{aligned}
%\right.
%\end{equation}
%It is easy to check, by the usual theory of mixed methods, that \eqref{K1_3} has a unique solution $(\HH,p)$. Then we check that
%$\HH$ and $\mu\HH$  give the solution of  \eqref{Max3}  and $p=0$.  Checking  that $p=0$ is immediate, just taking $\vv=\nabla p$ in the first equation. Once we know that $p=0$, the first equation gives $\bcurl\HH=\jj$, and then the second equation gives $\div\mu\HH=0$.

% ------------------------------------------------------------------------------------
\section{Two-dimensional Simplified Serendipity Spaces}\label{2D}
% ------------------------------------------------------------------------------------
 
We begin with the definition of the local spaces. Let $\PP$ denote a generic polyhedron, and let $f$ be a face of such polyhedron. For the time being, we only assume that all faces $f$ are simply connected.

%%%%%%%%%%%%%%%%%%%
\renewcommand{\k}{0}
\newcommand{\kd}{\k}
\newcommand{\kr}{\k}
%%%%%%%%%%%%%%%%%%%

% ------------------------------------------------------------------------------------
\subsection{The  local spaces on faces}
% ------------------------------------------------------------------------------------

The following spaces are a simplified version of the local lowest-order {\it nodal} and {\it edge} Serendipity spaces on faces introduced in \cite{max2}.

\subsection{ The local {\it nodal} space on faces}

The  present local {\it nodal} Virtual Element space on faces can be seen as an extension to polygons
of the simplest space of piecewise linear functions on triangles, as well as a simplified version of the
basic VEM nodal spaces.

Indeed, in \cite{SERE-nod} (formula (2.5) with $k=1,\,k_{\Delta}=0$)
the basic lowest order local nodal space on faces was introduced as
\begin{equation}\label{0-nodf-1}
\widetilde{V}_{1}^{\nodal}(f)  := \Big\{ q \in C^0(f): \: q_{|e} \in \P_{1}(e) \: \forall e \in \partial f, \: \Delta q \in \P_{0}(f)\Big\}. 
 \end{equation}
It is clear that we always have $\P_1\subset\widetilde{V}_{1}^{\nodal}(f)$. The  original degrees of freedom proposed in \cite{volley} or in \cite{projectors} were  
%{\begin{itemize}
%\item The values of $q$ at the vertices:
%\begin{equation}\label{0-2dofnv}
% \mbox{ for each vertex $\nu$, the value $q(\nu)$. } 
%\end{equation}
%\item The mean value of $q$ on $f$
%\begin{equation}\label{0-2dofnmv}
%\frac{1}{|f|}\int_f q\df.
%\end{equation}
%\end{itemize}}
%
%\begin{equation*}
\begin{align}
&\bullet \mbox{ The values of }q \mbox{ at the vertices \qquad (that is : for each vertex }\nu, \mbox{ the value }q(\nu));\label{0-2dofnv} \\ 
%\end{equation}
%\begin{equation}
&\bullet \mbox{ The mean value of }q \mbox{ on } f\qquad \mbox{(that is: for each face } f,\quad \frac{1}{|f|}\int_f q\df).\label{0-2dofnmv}
\end{align}
%\end{equation*}
%
However, it was pointed out in \cite{max2} that the degrees of freedom \eqref{0-2dofnmv} could be replaced by the integral
 \begin{equation}\label{altdof}
 \int_f \nabla q\cdot \xx_f\df.
 \end{equation}
Indeed, the obvious identity 
 \begin{equation}\label{0-nodf-2}
2\int_f q\df=\int_f q\,\div\xx_f\df=-\int_f \nabla q\cdot \xx_f\df+\int_{\partial f}q\xx_f\cdot\nn\ds 
 \end{equation}
shows that given boundary values and mean value \eqref{0-2dofnmv} on $f$ one can compute \eqref{altdof} and conversely, given the boundary values 
and \eqref{altdof}  one can compute the mean value on $f$. 
 
 At this point,  remembering  \eqref{xxfmed0}, we  easily see that 
 \begin{equation}\label{zerosuq1}
 \int_f \nabla q\cdot \xx_f\df=0\qquad\forall q\in \P_1,
 \end{equation} 
so that the linear subspace of  $\widetilde{V}_{1}^{\nodal}(f)$ made of those $q$ that verify  \eqref{zerosuq1} will still contain $\P_1$.  Such a subspace can obviously be written as
\begin{equation}\label{0-nodf}
V_{1}^{\nodal}(f)    := \Big\{ q\in C^0(f) : q_{|e} \in \P_{1}(e) \: \forall e \in \partial f, \: \Delta q \in \P_{\kd}(f), \int_{f}\nabla q \cdot \xxf \df=0\Big\}, 
 \end{equation}
 and will be our local {\it scalar nodal VEM space}, with the degrees of freedom \eqref{0-2dofnv}
(i.e., the vertex values).
% -------
{ Then from the unisolvence of the degrees of freedom \eqref{0-2dofnv} and \eqref{0-2dofnmv} in $\widetilde{V}_{1}^{\nodal}(f)$ it follows rather obviously  that the degrees of freedom \eqref{0-2dofnv} are unisolvent in $V_{1}^{\nodal}(f)$.}
% (since the integral \eqref{altdof} is always zero by the definition of the space).}
Indeed: for a $q\in V_{1}^{\nodal}(f)$,  the degrees of freedom \eqref{0-2dofnv} give the value of $q$ along the whole $\partial f$.
Then, following \eqref{0-nodf-2}, and using the information that $ \int_{f}\nabla q \cdot \xxf \df=0$ (that is included in the definition \eqref{0-nodf} of $q\in V_{1}^{\nodal}(f)$) we have 
 \begin{equation}\label{0-nodf-2-0}
2\int_f q\df=\int_{\partial f}q\,\xx_f\cdot\nn\ds ,
 \end{equation}
and then from the boundary values of $q$ we can easily compute the mean value of $q$ on $f$. Now we know both the degrees of freedom
\eqref{0-2dofnv}  {\it and} \eqref{0-2dofnmv}, and we are back on the track of the space
$\widetilde{V}_{1}^{\nodal}(f)$.

We point out that, if needed, out of the  d.o.f. in \eqref{0-2dofnv} we can compute the $L^2$-projection $\Pi_1$ of $\nabla q$ onto $(\P_1)^2$. Indeed, from the definition of projection and \eqref{0-nodf-2}
%\eqref{equiv-dof} 
we deduce
\begin{equation}\label{proj-nodal-2D}
\int_f \Pi_1 \nabla q\cdot \pp_1\, \df:=\int_f  \nabla q\cdot \pp_1\, \df= -\int_f q\, \div \pp_1 \df +\int_{\partial f} q\, \pp_1\cdot \nn \ds,
\end{equation}
and the last two terms are computable for every $\pp_1\in(\P_1)^2$.

\subsection{ The local {\it edge} space on faces}
{
The  present local {\it edge} Virtual Element space on faces can be seen as an extension to polygons
of the simplest space of lowest order  N\'ed\'elec elements of the first kind $\ND$  (see \eqref{ND}), as well as a simplified version of the
basic VEM edge spaces in \cite{max2,SERE-mix}.

Indeed, in \cite{SERE-mix} (formula (6.1) with $k=0,\, k_d=0, \, k_r=0$)
the basic lowest order local edge space on faces was introduced as

\begin{equation}\label{0-edgef-1}
\widetilde{V}_{0}^{\edge}(f)  := \Big\{ \vv \in (L^2(f))^2\,:\div\vv \in \P_{0}(f), \: \rot\vv \in \P_{0}(f), \vv \cdot \tt_e \in \P_{0}(e) \: \forall e \in \partial f\\
\Big\} ,
\end{equation}
 with the degrees of freedom given by the (constant) tangential components on each edge, plus
the integral
\begin{equation}\label{edge1}
\int _f\vv \cdot \xx_f\df.
\end{equation}
  
The space given in \eqref{0-edgef-1} clearly contains all constant vector fields.
%, as well as
%vector fields of the form $\vv=\pp_0+\xx_f ^\perp p_0$ with $\pp_0\in\R^2$ and $p_0\in\R$ (that is, the lowest order N\'ed\'elec elements of the first kind, as we have seen). 
However, for these functions the
degree of freedom \eqref{edge1} is identically zero (due to \eqref{xxfmed0}).
%(since $ \xx_f ^\perp\cdot\xx_f\equiv 0$, and the mean value of $\xx_f$ on $f$ is also zero). 
%Hence our target (to keep the space of the  lowest order N\'ed\'elec elements of the first kind inside the space)  will be equally satisfied in the subspace
Hence we could consider the subspace
\begin{equation} \label{0-edgef}
\begin{aligned}
V^{\edge}_{0}(f) := \Big\{ \vv \in (L^2(f))^2\,:\,&\div\vv \in \P_{\kd}(f), \: \rot\vv \in \P_{\kr}(f), \vv \cdot \tt_e \in \P_{\k}(e) \: \forall e \in \partial f,\\
&\mbox{ and }~ \int_{f}\vv\cdot\xxf \df=0\Big\} ,
\end{aligned}
\end{equation}
that we will take, from now on, as our edge VEM space, and have that it always contains 
the constant vector fields.
%with dimension
%\begin{equation}\label{dimVef}
%\mbox{dim } V^{\rm e}_{k-1}(f) = k N_e+ 2 \pi_{k-1,2}-1. %+k(k+1)-1.
%\end{equation}
In $V^{\edge}_{\k}(f)$ we will therefore have the degrees of freedom: 
\begin{equation}\label{0-2dofe1}
\bullet\quad\mbox{ on each $e\in \partial f$, the moments $\displaystyle\int_e \vv\cdot\tt_e \de $.}
\end{equation}
Note that the number of degrees of freedom of $V^{\nodal}_{1}(f)$ (=number of vertexes) and of  $V^{\edge}_{\k}(f)$  (=number of edges) obviously coincide.

\begin{remark} It is immediate to check that the space  \eqref{0-edgef} {contains, together with
constant vector fields,  also all} vector fields of the form $\vv=\pp_0+\xx_f ^\perp p_0$ with $\pp_0\in\R^2$ and $p_0\in\R$ (that is, the lowest order N\'ed\'elec elements of the first kind, {that we recalled} in \eqref{ND}).\hfill\qed
\end{remark}

 We observe that the d.o.f. \eqref{0-2dofe1} allow to compute, for each $\vv\in V^{\edge}_{\k}(f)$, the (constant) value of $\rot \vv$ on the face $f$ by the usual Stokes theorem
 \begin{equation}\label{Stokesf}
 |f| \rot\vv=\int_f\rot\vv\df=\int_{\partial f}\vv\cdot\tt\ds ,
\end{equation} 
 as well as the $L^2$-orthogonal projection $\Pi_1: V^{\edge}_{0}(f) \rightarrow (\P_1(f))^2$. Indeed, using \eqref{decoPs} and integrating by parts we have
\begin{equation}\label{proj-edge-2D}
\begin{aligned}
\int_f \Pi_1 \vv \cdot \pp_1 \df&:= \int_f  \vv \cdot \pp_1 \df = \int_f  \vv \cdot (\brot\, p_2+\xxf p_0) \df\\
& =\int_f \rot \,\vv \,p_2 \df + \int_{\partial f} \vv\cdot \tt \,p_2 \ds + \int_f \vv\cdot \xxf p_0 \df\\
& =\int_f \rot \,\vv \,p_2 \df + \int_{\partial f} \vv\cdot \tt \,p_2 \ds,
\end{aligned}
\end{equation}
and all the terms in the right-hand side are computable.

%{\color{blue}
%{\subsection{A computable scalar product in $V^{\edge}_{0}(f)$}
%Having the possibility to compute, for each $\vv\in V^{\edge}_{0}(f)$, its $(L^2(f))^2$ projection on 
%$(\P_1(f))^2$ we can then define (and compute!) the {\it discrete scalar product} between two elements $\vv$ and $\ww$ of $V^{\edge}_{0}(f)$
%\begin{equation}\label{scal-e-f}
%[\vv,\ww]_{{\edge}, f}:=\int_f (\Pi_1\vv)\cdot(\Pi_1\ww)\df+h_f\int_{\partial f}
%[(\vv-\Pi_1\vv)\cdot\tt]\,[(\ww-\Pi_1\ww)\cdot\tt]\ds
%\end{equation}
%where $h_f$ is the diameter of the face $f$. 
%
% Moreover, under the mesh assumptions introduced in Section \ref{theo:est}, it is (lenghty but) not difficult to check that  the above scalar product is equivalent to the original one in the sense
%that 
%\begin{equation}
%\alpha_* \int_f\vv\cdot\ww\df\le [\vv,\ww]_{{\edge},f}\le\alpha^*\int_f\vv\cdot\ww\df
%\end{equation}
%with $\alpha_*$ and $\alpha^*$ positive constants.
%}
%}

We close this section with a simple but important result.

\begin{prop}\label{comm-2D}
It holds
\begin{equation}\label{ntoe2}
\nabla V^{\nodal}_1(f)=\{ \vv \in V^{\edge}_0(f):~ \rot \vv =0\}.
\end{equation}
\end{prop}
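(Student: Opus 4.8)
The plan is to prove the two inclusions of \eqref{ntoe2} separately, relying on the characterizations \eqref{0-nodf} and \eqref{0-edgef} of the local spaces.

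\textbf{The inclusion $\nabla V^{\nodal}_1(f)\subseteq\{\vv\in V^{\edge}_0(f):\rot\vv=0\}$.} Take $q\in V^{\nodal}_1(f)$ and set $\vv:=\nabla q$. We must check that $\vv$ satisfies the four defining conditions in \eqref{0-edgef}. Since $\Delta q\in\P_0(f)$ we have $\div\vv=\Delta q\in\P_0(f)$; since $\vv$ is a gradient, $\rot\vv=0\in\P_0(f)$ (which also establishes the extra condition $\rot\vv=0$ needed on the right-hand side); on each edge $e$, $q_{|e}\in\P_1(e)$ implies that the tangential derivative $\vv\cdot\tt_e$ equals $\mathrm{d}(q_{|e})/\mathrm{d}s\in\P_0(e)$; and finally the moment condition $\int_f\vv\cdot\xx_f\df=\int_f\nabla q\cdot\xx_f\df=0$ is precisely the constraint built into the definition \eqref{0-nodf} of $V^{\nodal}_1(f)$. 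Hence $\vv\in V^{\edge}_0(f)$ with $\rot\vv=0$.

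\textbf{The reverse inclusion.} Let $\vv\in V^{\edge}_0(f)$ with $\rot\vv=0$. Since $f$ is simply connected and $\rot\vv=0$ in the distributional sense on $f$, there exists $q\in H^1(f)$, unique up to an additive constant, with $\nabla q=\vv$; fix the constant by prescribing $q$ at one vertex. It remains to verify $q\in V^{\nodal}_1(f)$. From $\Delta q=\div\vv\in\P_0(f)$ we get the interior regularity condition. On each edge $e$, $\mathrm{d}(q_{|e})/\mathrm{d}s=\vv\cdot\tt_e\in\P_0(e)$, so $q_{|e}$ is an affine function of the arc length on $e$; since $e$ is a straight segment, this means $q_{|e}\in\P_1(e)$, and because $q$ is continuous across $f$ (being in $H^1$ with piecewise-polynomial interior, standard VEM regularity gives $q\in C^0(f)$) the boundary traces match at the vertices, so $q\in C^0(f)$. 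The moment condition $\int_f\nabla q\cdot\xx_f\df=\int_f\vv\cdot\xx_f\df=0$ holds by hypothesis on $\vv$. Therefore $q\in V^{\nodal}_1(f)$ and $\vv=\nabla q\in\nabla V^{\nodal}_1(f)$.

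\textbf{Main obstacle.} The nontrivial point is the reverse inclusion, and specifically two regularity/topology issues: first, invoking simple connectedness of $f$ to integrate a curl-free field to a potential (this is why the authors assumed all faces simply connected); and second, checking that the recovered potential $q$ is genuinely in $C^0(f)$ rather than merely $H^1(f)$ — this follows because $\div\vv\in\P_0(f)$ and the edge data are polynomial, so $q$ solves a Poisson problem with $H^1$-conforming piecewise-polynomial boundary data and is thus continuous, but it should be stated rather than glossed over. Everything else is a direct matching of the defining conditions of the two spaces, made to align by the very choice of the moment constraint $\int_f(\cdot)\cdot\xx_f\df=0$ common to both definitions \eqref{0-nodf} and \eqref{0-edgef}.
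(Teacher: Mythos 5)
Your proof is correct, but it takes a genuinely different route from the paper's on the harder half. The forward inclusion $\nabla V^{\nodal}_1(f)\subseteq\{\vv\in V^{\edge}_0(f):\rot\vv=0\}$ is handled the same way in both (direct verification of the defining conditions, with the moment constraint $\int_f\nabla q\cdot\xxf\df=0$ matching by design). For the reverse inclusion, however, the paper does not integrate $\vv$ to a potential at all: it concludes by a dimension count, namely $\dim\bigl(\nabla V^{\nodal}_1(f)\bigr)=\dim V^{\nodal}_1(f)-1=N_v-1=N_e-1=\dim\bigl(\{\vv\in V^{\edge}_0(f):\rot\vv=0\}\bigr)$, using unisolvence of the vertex values, the fact that $N_v=N_e$ for a polygon, and that $\rot\vv=0$ is a single nontrivial linear constraint (computable via Stokes, see \eqref{Stokesf}). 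You instead recover $q\in H^1(f)$ with $\nabla q=\vv$ from simple connectedness and verify the conditions of \eqref{0-nodf} one by one. Your route is more constructive and makes explicit where simple connectedness enters, but it obliges you to address the continuity of the recovered potential up to the boundary (which you flag, and which does follow from $\Delta q\in\P_0(f)$ together with continuous piecewise-affine boundary data on a Lipschitz polygon, though your appeal to ``standard VEM regularity'' is a bit loose); the paper's dimension count sidesteps that regularity question entirely, at the price of being less transparent about why the two sets coincide. It is worth noting that the paper itself adopts exactly your potential-recovery strategy for the three-dimensional analogue \eqref{diagramVn-Ve}, where it invokes this two-dimensional proposition face by face, so your argument is fully in the spirit of the paper even if it is not the one used here.
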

\begin{proof}
We start by noting that, for any function $q \in V^{\nodal}_1(f)$, it holds 
$$   
\nabla q \in V^{\edge}_0(f)~ \mbox{ and }~ \rot \nabla q =0.
$$
Indeed, it is immediate to check that $\nabla q$ satisfies all the requirements in the definition of $V^{\edge}_0(f)$ and, being a gradient, it also has vanishing rotor. Therefore 
$$\nabla V^{\nodal}_1(f) \subseteq \{ \vv \in V^{\edge}_0(f):~ \rot \vv =0\},$$ that combined with 
$$
\begin{aligned}
\textrm{dim} \left( \nabla V^{\nodal}_1(f) \right) &= \textrm{dim} \left( V^{\nodal}_1(f) \right)  - 1 
= \textrm{dim} \left( V^{\edge}_0(f) \right) - 1  \\
& = \textrm{dim} \Big( \{ \vv \in V^{\edge}_0(f):~ \rot \vv =0\} \Big)
\end{aligned}
$$
yields the proof.
\end{proof}

\section{Three-dimensional spaces}\label{3D}

Let  $\PP$ be a polyhedron. For the time being, we just assume that $\PP$ and all of its faces are simply connected. Let $N_v$ be the number of vertices, $N_e$ the number of edges, and $N_{f}$ the number of faces of $\PP$.

% ---------------------------------------------------------------------------------
\newcommand{\kdP}{k-1}
\subsection{ The local spaces on polyhedrons}
% ---------------------------------------------------------------------------------

For each face $f$ we are going to use the spaces $V^{\nodal}_{1}(f)$ and $V^{\edge}_{\k}(f)$ as defined in \eqref{0-nodf} and \eqref{0-edgef}, respectively.  Then we introduce their three-dimensional  analogues.
% In order to define  the corresponding spaces on $\PP$ we first define the {\it nodal Serendipity boundary spaces} as
% \begin{equation}\label{aux1}
% \calB^n_S(\partial\PP):=\{q\in C^0(\partial\PP)~\mbox{ such that } q_{|f}\in SV^n_{1}(f)\quad\forall \mbox{ face }f\in\partial\PP\},
%\end{equation}
%and  the {\it edge Serendipity boundary spaces} as  
% \begin{multline}\label{aux2}
% \calB^e_{S}(\partial\PP):=\{\vv \mbox{ such that } \vv_{|f}\in SV^e_{\k}(f)\quad\forall \mbox{ face }f\in\partial\PP, \\
% \mbox{ and }
% \vv\cdot\tt_e \mbox{ continuous at each edge } e\in\partial\PP\}.
%\end{multline}

% ---------------------------------------------------------------------------------
\subsubsection{ The local {\it nodal} spaces}
% ---------------------------------------------------------------------------------

The three dimensional local nodal space is defined as 
\begin{equation}\label{nod3}
 V^{\nodal}_{1}(\PP)    := \Big\{ q \in C^0(\PP) \: : q_{|f}\in V^{\nodal}_{1}(f)~\forall \mbox{ face }f\in\partial\PP, \:  \,\Delta\,q = 0 \mbox{ in } \PP\Big\}. 
\end{equation}
In $V^{\nodal}_{1}(\PP)$ the degrees of freedom are simply
\begin{equation}
 \bullet \mbox{ for each vertex $\nu$, the nodal value $q(\nu).$ } \label{dof-3dnk-1}
\end{equation}
%
%\begin{prop}\label{unisolv-VnP}
%The operators \eqref{dof-3dnk-1} constitute a set of degrees of freedom for $V^n_{1}(\PP)$.
%\end{prop}
%\begin{proof}
%Using Proposition \ref{unisolv-Vnf} for all faces $f \in \partial P$, it follows immediately that \eqref{dof-3dnk-1} constitute a set of degrees of freedom for the boundary space $V^n_{1}(\PP)|_{\partial P}$. Since the functions in $V^n_{1}(\PP)$ are harmonic, the boundary space is isomorphic to the full space $V^n_{1}(\PP)$, and the proof follows.
%\end{proof}

\noindent We note that
$$
\P_1(\PP) \subseteq V^{\nodal}_{1}(\PP) ,
$$
since first order polynomials clearly satisfy all the conditions in \eqref{nod3}. 

%the definition of $V^{\nodal}_{1}(\PP)$.

% ---------------------------------------------------------------------------------
\subsubsection{The local {\it edge} spaces}
% ---------------------------------------------------------------------------------

{In analogy with the two-dimensional case we start by recalling  the three dimensional edge space  defined in \cite{SERE-mix} as 
\begin{multline} \label{edge3}
\widetilde{V}^{\edge}_{\k}(\PP) := \Big\{ \vv\in(L^2(\PP))^3\,: {\vv^{\tau}}_{|f}\in V^{\edge}_{\k}(f)~\forall \mbox{ face }f\in\partial\PP,~\vv\cdot\tt_e \mbox{ continuous at each edge } e\in\partial\PP, \\
\div\vv=0 \mbox{ in }\PP, \: \curl(\curl\vv) \in (\P_{\kr}(\PP))^3
 \Big\} ,
\end{multline}
with the degrees of freedom given by the values of the (constant) tangential components on each
edge plus the integrals
\begin{equation}\label{pippo3}
\int_{\PP}(\bcurl\vv)\cdot(\xx_{\PP}\wedge {\pp}_{\kr})\,\dPP \quad \forall\,\pp_{\kr}\in(\P_0(\PP))^3.
\end{equation}
We observe, first, that the constant vectors 
%belonging to the  three-dimensional lowest order N\'ed\'elec space of first kind
%(namely: $(\P_0)^3+\xx_{\PP}\wedge(\P_0)^3$, as defined in \eqref{ND}) 
are contained in $\widetilde{V}^{\edge}_{\k}(\PP)$, and for them the integral in \eqref{pippo3}
is always equal to zero}. Hence, following the path that is becoming usual here, we set}
\begin{multline} \label{edge3-bis}
 V^{\edge}_{\k}(\PP) := \Big\{ \vv\in(L^2(\PP))^3\,: {\vv^{\tau}}_{|f}\in V^{\edge}_{\k}(f)~\forall \mbox{ face }f\in\partial\PP,~\vv\cdot\tt_e \mbox{ continuous at each edge } e\in\partial\PP, \\
\div\vv=0 \mbox{ in }\PP, \: \curl(\curl\vv) \in (\P_{\kr}(\PP))^3,
~\int_{\PP}(\bcurl\vv)\cdot(\xx_{\PP}\wedge {\pp}_{\kr})\,\dPP =0\quad \forall {\pp}_{\kr} \in (\P_{\kr}(\PP))^3 \Big\} ,
\end{multline}
{and observe that all the constant vector fields 
 are contained in ${V}^{\edge}_{\k}(\PP)$}.

\begin{remark} \label{AllN13D}It is easy to check that the space  \eqref{edge3-bis}, together with
constant vector fields, contains all vector fields of the form $\vv=\pp_0+\xx_{\PP}\wedge  \qq_0$ with $\pp_0$ and $\qq_0$ in $\R^3$ (that is, the lowest order N\'ed\'elec elements of the first kind in three dimensions, as defined in \eqref{ND}).\hfill\qed
\end{remark}

In $V^{\edge}_{\k}(\PP) $ we have now the degrees of freedom,
\begin{equation}
 \bullet\mbox{ on each edge $e \in \partial P$ the moments $\displaystyle\int_e \vv\cdot\tt_e \de .$ } \label{dof-3dek-1}
\end{equation}

Out of the above degrees of freedom we can compute the $(L^2(\PP))^3$-orthogonal projection $\Pi_{0}$ from $V^{\edge}_{\k}(\PP)$ to $(\P_0(\PP))^3$. 
Indeed,
 by definition of projection,  \eqref{isodiv-3}, and an integration by parts we have:
\begin{equation}\label{proj-edge-3D}
\begin{aligned}
\int_{\PP} \Pi_0\vv \cdot \pp_0 \dPP&:=\int_{\PP} \vv \cdot \pp_0 \dPP=\int_{\PP} \vv \cdot \curl(\xxP \wedge{\boldsymbol q}_0) \dPP ~(\mbox{for }{\boldsymbol q}_0 = -\frac{1}{2} {\boldsymbol p}_0)\\
&= 
%\underbrace
{\int_{\PP} \curl\vv \cdot (\xxP \wedge{\boldsymbol q}_0) \dPP} + \int_{\partial \PP} (\vv\wedge \nn) \cdot (\xxP \wedge{\boldsymbol q}_0) \dS\\
&=\hskip2cm 0~~\qquad  \qquad \!\!\!\!+\int_{\partial \PP}  \Big(\nn \wedge(\xxP \wedge{\boldsymbol q}_0)\Big) \cdot \vv\dS\\
%\underbrace
&=%\hskip2cm 0~~\qquad  \qquad \!\!\!\!+
%\underbrace
{\sum_f \int_f \Big(\nn \wedge(\xxP \wedge{\boldsymbol q}_0)\Big)^{\tau} \cdot {\vv^{\tau}}  \df,}
\end{aligned}
\end{equation}
that is computable as in \eqref{proj-edge-2D}.
%{\color{red} Ci devo meditare
%
%Following \cite{SERE-mix} we see that, out of the above degrees of freedom, we can compute the $(L^2(\PP))^3$-orthogonal projection $\Pi_{0}$ from $V^{\rm{e}}_{\k}(\PP)$ to $(\P_0(\PP))^3$. Indeed,  writing $V^{\rm{e}}_{\k}$ in \eqref{edge3} as 
%$V^{\rm{e}}_{\{(\beta,\beta_d,\beta_r),k_d,(\mu,\mu_d\mu_r)\}}$
% (that is, with the notation of \cite{SERE-mix}) we know that we can take
%$s=\min\{\beta_d,k_d+1,\mu_r\}$, that here are all equal to $\k$.}
%
Hence, we can define a scalar product
\begin{equation}\label{PSe3k}
[\vv,\ww]_{{\edge},\PP}:=(\Pi_{\k}\vv,\Pi_{\k}\ww)_{0,\PP}+
h^2_{\PP} \sum_{e\in \partial \PP} \int_e [(\vv-\Pi_0\vv)\cdot\tt_e]\,[(\ww-\Pi_0\ww)\cdot\tt_e]\de
\end{equation}
and we note that (assuming  very mild  mesh regularity conditions, for instance as in Section \ref{theo:est}) we have
\begin{equation}\label{PSe3k-1}
\alpha_* (\vv,\vv)_{0,\PP}\le [\vv,\vv]_{{\edge},\PP}\le 
\alpha^* (\vv,\vv)_{0,\PP}\qquad\forall \vv\in V^{\edge}_{\k}(\PP)
\end{equation}
{for some constants $\alpha_*, \alpha^*$ independent of $h_{\PP}$}.
We observe that
\begin{equation}\label{consiE3k}
[\vv,{\pp}_{\k}]_{{\edge},\PP}=\int_{\PP}\vv\cdot {\pp}_{\k}\dE =(\vv,{\pp}_{\k})_{0,\PP}\quad\forall \vv\in V^{\edge}_{\k}(\PP),\;\forall {\pp}_{\k}\in (\P_{\k}(\PP))^3.
\end{equation}
\subsubsection{The local {\it face} spaces}
% ---------------------------------------------------------------------------------

 In three dimensions we will also need a Virtual Element face space.  For it, we proceed as in the previous case. We start with the space defined in \cite{SERE-mix} 
 \begin{equation} \label{face3}
\widetilde{V}^{\face}_{\k}(\PP) \!:=\! \Big\{ \bpsi\in (L^2(\PP))^3: \bpsi\cdot\nn_f\in\P_{\kr}(f)~\forall\mbox{ face }f,\,\div\bpsi\!\in\! \P_{\k}(\PP), \, \curl\bpsi\!\in\!\! (\P_{\kr}(\PP))^3
 \Big\},
\end{equation}
where the degrees of freedom are given by the (constant) values of the normal components on the faces plus the value of the integrals:
\begin{equation}\label{pippof}
\int_{\PP}
\bpsi\cdot (\xx_{\PP}\wedge {\pp}_{\kr}) \dPP \quad \forall {\pp}_{\kr} \in [\P_{\kr}(\PP)]^3.
\end{equation}
We note that the constant vector fields are inside this space, but also that the value of the integral in \eqref{pippof},
 for $\bpsi$ constant, is always equal to zero, due to \eqref{xxpmed0}, so that we can define 
\begin{multline} \label{face3-bis}
\! {V}^{\face}_{\k}(\PP) \!:=\! \Big\{ \bpsi\in (L^2(\PP))^3: \bpsi\cdot\nn_f\in\P_{\kr}(f)~\forall\mbox{ face }f,\,\div\bpsi\!\in\! \P_{\k}(\PP), \, \curl\bpsi\!\in\!\! (\P_{\kr}(\PP))^3, \\
	\mbox{and}\int_{\PP}
\bpsi\cdot (\xx_{\PP}\wedge {\pp}_{\kr}) \dPP =0\quad \forall {\pp}_{\kr} \in [\P_{\kr}(\PP)]^3
  \Big\}.
\end{multline}

\begin{remark}\label{AllRT3D} It is easy to check that the space  \eqref{face3-bis}, together with
constant vector fields, contains all vector fields of the form 
  $(\P_0)^3+\xx_{\PP}\P_0$ (that is, the lowest order Raviart-Thomas space, as defined 
  in \eqref{RT}).  \qed
\end{remark}

\noindent For $V^{\face}_{\k}(\PP)$ we have  therefore the degrees of freedom
\begin{equation}\label{dof-3dfk-1}
\bullet\mbox{ for each face $f \in \partial P$ the moments $\displaystyle\int_f
\bpsi\cdot\nn_f \df . $} 
\end{equation}
{Clearly, out of the degrees of freedom \eqref{dof-3dfk-1} we can easily compute
the (constant) value of $\div\bpsi$
\begin{equation}\label{div}
\div\bpsi=\frac{1}{|\PP|}\int_{\PP}\div\bpsi\dPP=\frac{1}{|\PP|}\int_{\partial\PP}\bpsi\cdot\nn\dS .
\end{equation}
}
According to \cite{SERE-mix} we {also have, now, }that from the above degrees of freedom we can compute
the $(L^2(\PP))^3$-orthogonal projection %$\Pi_{s}$ 
from $V^{\face}_{\k}(\PP)$ to $(\P_0(\PP))^3$
(and, actually, to $(\P_1(\PP))^3$). Indeed, using \eqref{decompPs3D}, {an integration by parts, and \eqref{face3-bis}, } we have:
\begin{equation}\label{proj-face-3D}
\begin{aligned}
\int_{\PP} \Pi_{1} \bpsi \cdot \pp_1 \dPP&: =\int_{\PP} \bpsi \cdot \pp_1 \dPP =\int_{\PP}  \bpsi \cdot (\nabla p_2 + \xx_{\PP} \wedge \pp_0) \dPP\\
&= -\int_{\PP} \div \bpsi \, p_2 \dPP + \int_{\partial \PP} \bpsi \cdot \nn \,p_2\dS + \int_{\PP}  \bpsi \cdot  (\xx_{\PP} \wedge \pp_0) \dPP\\
&= -\int_{\PP} \div \bpsi \, p_2 \dPP + \int_{\partial \PP} \bpsi \cdot \nn \,p_2\dS,
\end{aligned}
\end{equation}
{where, using \eqref{div} and \eqref{dof-3dfk-1}}, all the terms in the right-hand side are computable.
 Hence, we can define
a scalar product
\begin{equation}\label{PSf3k}
[\bpsi,\bphi]_{{\face},\PP}:=(\Pi_{\k}\bpsi,\Pi_{\k}\bphi)_{0,\PP}+
{h_{\PP}\sum_{f\in \partial \PP} \int_f[(I-\Pi_{\k})\bpsi\cdot\nn][(I-\Pi_{\k})\bphi\cdot\nn]\df},
\end{equation}
and we again note that, assuming  very mild  mesh regularity conditions, for instance as in Section \ref{theo:est},
{
\begin{equation}\label{PSf3k-1}
\alpha_1 (\bpsi,\bpsi)_{0,\PP}\le [\bpsi,\bpsi]_{{\face},\PP}\le 
\alpha_2 (\bpsi,\bpsi)_{0,\PP}\qquad\forall \bpsi\in V^{\face}_{\k}(\PP) 
\end{equation}
for positive constants $\alpha_1, \alpha_2$ independent of $h_{\PP}$.}
Moreover, we also have
\begin{equation}\label{consiF3k}
[\bpsi,{\bf p}_{\k}]_{{\face},\PP}=\int_{\PP}\bpsi\cdot{\pp}_{\k}\dE=(\bpsi, {\pp}_{\k})_{0,\PP}\qquad\forall \bpsi\in V^{\face}_{\k}(\PP),\;\forall{\pp}_{\k}\in (\P_{\k}(\PP))^3.
\end{equation}
%

%We note that using the degrees of freedom \eqref{dof-3dek-1}-\eqref{dof-3dek-3}, for every face $f$ we can compute the $(L^2(f))^2$-orthogonal projection operator from $V^{\rm{e}}_{k-1}(f)$ to $(\P_{k-1}(f))^2$, and using this and \eqref{dof-3dek-4}-\eqref{dof-3dek-4} we can compute the $(L^2(\PP))^3$-orthogonal projection from $V^{\rm{e}}_{k-1}(\PP)$ to $(\PP_{k-1}(\PP))^3$: first, integrating by parts and using the boundary data, we get the integral of $\vv$ against any polynomial vector of degree $\le k-2$ having zero divergence (and hence against the $\bcurl$ of any polynomial vector of degree $\le k-1$). Then we recall that every vector polynomial of degree
%
%%
 \begin{remark}\label{tetra} Using Remarks \ref{AllRT3D} and \ref{AllN13D}, by a simple dimensional count we see that the present local Nodal, Edge, and Face Virtual Element spaces coincide, whenever the element $\PP$ is a tetrahedron,  with the classical $\P_1$, $N_0$ and $RT_0$ elements (respectively). With a minor additional effort we could see that the same is  true when $\PP$ is a ''rectangular box''. Note that the methods do not coincide, due to a different choice of the scalar products. However, the two types of scalar products are equivalent, coincide on constants, and in practice the results are not significantly different.   Hence, the present setting might be considered as a sort of ``natural'' extension of the $\P_1$-$N_0$-$RT_0$ approach to (much) more general element geometries.
 \qed
 \end{remark}

% ---------------------------------------------------------------------------------
\subsubsection{Exact sequence properties}
% ---------------------------------------------------------------------------------

{Here we} present two important results. We first have

\begin{prop}
It holds
\begin{equation}\label{diagramVn-Ve}
\nabla V^{\nodal}_{1}(\textup\PP)=\{ \vv \in V^{\edge}_{\k}(\textup\PP):~ \bcurl \vv =0\}.
\end{equation}
\end{prop}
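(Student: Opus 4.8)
The plan is to mimic exactly the proof of Proposition \ref{comm-2D} (the two-dimensional analogue), replacing $\rot$ by $\bcurl$ and the face spaces by their three-dimensional counterparts. The argument has two halves: an easy inclusion, and a dimension count to upgrade it to an equality.

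First I would establish the inclusion $\nabla V^{\nodal}_{1}(\PP)\subseteq\{\vv\in V^{\edge}_{\k}(\PP):\bcurl\vv=0\}$. Take $q\in V^{\nodal}_{1}(\PP)$; I must check that $\vv:=\nabla q$ satisfies every defining condition of $V^{\edge}_{\k}(\PP)$ in \eqref{edge3-bis}. On each face $f$, the tangential component $(\nabla q)^{\tau}_{|f}$ equals $\nabla_f(q_{|f})$, and since $q_{|f}\in V^{\nodal}_{1}(f)$, Proposition \ref{comm-2D} gives $\nabla_f(q_{|f})\in V^{\edge}_{0}(f)$; the continuity of $\nabla q\cdot\tt_e$ across edges follows from the continuity of the tangential derivative of the globally $C^0$, edgewise-$\P_1$ function $q$. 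Next, $\div\nabla q=\Delta q=0$ in $\PP$ by the definition \eqref{nod3}, and $\curl(\curl\nabla q)=\curl(\boldsymbol 0)=\boldsymbol 0\in(\P_{\kr}(\PP))^3$. Finally the integral constraint $\int_{\PP}(\bcurl\nabla q)\cdot(\xx_{\PP}\wedge\pp_{\kr})\dPP=0$ holds trivially because $\bcurl\nabla q=\boldsymbol 0$. Hence $\nabla q\in V^{\edge}_{\k}(\PP)$, and of course $\bcurl\nabla q=\boldsymbol 0$, proving the inclusion.

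Second I would close the gap by comparing dimensions. On the left, $\dim(\nabla V^{\nodal}_{1}(\PP))=\dim(V^{\nodal}_{1}(\PP))-1=N_v-1$, since the kernel of $\nabla$ on the connected set $\PP$ is the constants, and $V^{\nodal}_{1}(\PP)$ has exactly $N_v$ degrees of freedom \eqref{dof-3dnk-1} that are unisolvent (so in particular constants, which lie in $\P_1(\PP)\subseteq V^{\nodal}_{1}(\PP)$, are detected by those d.o.f.). On the right, I claim $\dim\{\vv\in V^{\edge}_{\k}(\PP):\bcurl\vv=0\}=N_v-1$ as well. The space $V^{\edge}_{\k}(\PP)$ has $N_e$ degrees of freedom \eqref{dof-3dek-1} (one per edge), and a curl-free field in it is, on each face, a curl-free element of $V^{\edge}_{0}(f)$, hence by Proposition \ref{comm-2D} a face-gradient; patching these together using the edge-continuity of the tangential components shows such a $\vv$ is globally a gradient $\nabla q$ of some $q$ that is $C^0$, edgewise linear, and harmonic in $\PP$ — i.e. $\vv\in\nabla V^{\nodal}_{1}(\PP)$, so the reverse inclusion holds too and both sides have dimension $N_v-1$. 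Combined with the first inclusion, this yields \eqref{diagramVn-Ve}.

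The main obstacle, and the point I would be most careful about, is the claim that a curl-free element of $V^{\edge}_{\k}(\PP)$ is globally a gradient — i.e. justifying that the face-wise primitives glue into a single-valued function on $\PP$. This is where simple connectedness of $\PP$ (and of each face) is used: on each face the two-dimensional Proposition \ref{comm-2D} produces $q_f$ with $\nabla_f q_f=\vv^{\tau}_{|f}$, determined up to a constant; the continuity of $\vv\cdot\tt_e$ at the shared edges lets one fix these constants consistently around each edge and, by simple connectedness, globally, giving a well-defined $q\in C^0(\PP)$ with $q_{|f}\in V^{\nodal}_{1}(f)$ and $\Delta q=\div\vv=0$. (Alternatively one may bypass the explicit gluing and simply invoke the exact-sequence machinery of \cite{SERE-mix} for $\widetilde V^{\edge}_{\k}$ and \eqref{ntoe2}.) Once this is in hand, the dimension count is routine and the proof is complete.
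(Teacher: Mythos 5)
Your proof is correct and follows essentially the same route as the paper's: the easy inclusion is checked condition by condition, and the converse rests on the fact that a curl-free field on the simply connected $\PP$ is a gradient whose restriction to each face lies in the right space by Proposition \ref{comm-2D} and which is harmonic because $\div\vv=0$. The only difference is cosmetic: the paper proves the reverse inclusion directly rather than wrapping it in a (then redundant) dimension count, and it obtains the primitive $q\in H^1(\PP)$ in one stroke from $\bcurl\vv=0$ on the simply connected solid $\PP$ and only afterwards restricts it to $\partial\PP$, which sidesteps the face-by-face gluing of constants that you rightly flag as the delicate point.
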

\begin{proof}
We first point out that, with the above definitions and using Proposition \ref{comm-2D}, for every $q\in V^{\nodal}_{1}(\PP)$, we easily have that the {\it tangential gradient} (applied {\it face by face}) will belong to $V^{\edge}_{\k}(f)$. Moreover, from the definition of $V^{\nodal}_{1}(\PP)$, we immediately have that $\bcurl \nabla q = 0$ and $\div\nabla q = \Delta q = 0$. Therefore, for any $q\in V^{\nodal}_{1}(\PP)$, it holds 
$
\nabla q \in \{ \vv \in V^{\edge}_{\k}(\PP):~ \bcurl \vv =0\}
%\nabla q \in V^e_{\k}(\PP) , \quad \bcurl \nabla q = 0 .
$
and thus  
$$
\nabla V^{\nodal}_{1}(\PP) \subseteq \{ \vv \in V^{\edge}_{\k}(\PP):~ \bcurl \vv =0\}.
$$

%{\color{red} FORSE NON NE VALE LA PENA ED E' MEGLIO IL VECCHIO? IL VECCHIO E' ORA COMMENTATO...
%
%We show the equality of the two sets by a dimensional count. It is immediate that
%\begin{equation}\label{vdim}
%{\rm dim}(\nabla V^{\nodal}_{1}(\PP)) = N_v -1 .
%\end{equation}
%We now calculate the dimension of $\{ \vv \in V^{\edge}_{\k}(\PP):~ \bcurl \vv =0\}$. We observe that $\bcurl\vv = 0$
%implies, for all $f \in \partial\PP$, 
%\begin{equation}\label{f-conds}
%0 = \int_f \bcurl\vv \cdot  \nn_f \df = \int_f \rot \vv_f \df = \sum_{e \in \partial f} \int_e \vv\cdot\tt_e^f \de ,
%\end{equation}
%where the $\tt_e^f =  \pm \tt_e$, ${e \in \partial f}$, in accordance with $\nn_f$. Recalling \eqref{dof-3dek-1} and noting that (for any sufficiently regular vector field {\bf w} living on $\PP$) 
%$$
%\sum_{f \in \partial E} \int_f \rot {\bf w}_f \df = 0 ,
%$$
%easily yield that \eqref{f-conds} constitute a set of $N_f-1$ independent conditions. 
%Therefore, also using the Euler formula for polyhedrons $N_v -N_e + N_f = 2$,
%\begin{equation}\label{edim}
%\begin{aligned}
%& {\rm dim} \{ \vv \in V^{\edge}_{\k}(\PP):~ \bcurl \vv =0\} \\
%& \le {\rm dim}(V^{\edge}_{\k}(\PP)) - (N_f - 1) = N_e -N_f + 1 = N_v -1 .
%\end{aligned}
%\end{equation}
%The proof clearly follows by combining \eqref{edim}  and \eqref{vdim}.
%}

Conversely, if $\vv \in V^{\edge}_{\k}(\PP)$ with $\bcurl \vv =0$, then $\vv = \nabla q$ for some $q \in H^1(\PP)$. Since, for any face $f \in \partial P$, it holds $\rot_f (\vv_{|f}) = \bcurl \vv \cdot \nn_f = 0$, using Proposition \ref{comm-2D} yields that $q$ restricted to the boundary of $\PP$ belongs to 
$V^{\nodal}_{1}(\PP)_{|\partial \PP}$. Finally, since $\div \vv =0$ for all $\vv \in V^{\edge}_{\k}(\PP)$, we have $\Delta q =0$. Thus $q$ belongs to $V^{\nodal}_{1}(\PP)$.
\end{proof}

\begin{prop}
It holds
\begin{equation}\label{diagramVe-Vf}
\bcurl \,V^{\edge}_{\k}(\textup\PP) := \{ \bpsi \in V^{\face}_{\k}(\textup\PP):~ \div \bpsi=0\}.
\end{equation}
\end{prop}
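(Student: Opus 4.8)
The plan is to prove the two inclusions of \eqref{diagramVe-Vf} by different means: the inclusion ``$\subseteq$'' by a direct check of the defining conditions of $V^{\face}_{\k}(\PP)$, and then the reverse inclusion by a dimension count that combines the previous exact-sequence result \eqref{diagramVn-Ve} with Euler's formula for $\PP$.

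First I would show that $\bcurl\,V^{\edge}_{\k}(\PP)\subseteq\{\bpsi\in V^{\face}_{\k}(\PP):\div\bpsi=0\}$. Let $\vv\in V^{\edge}_{\k}(\PP)$ and set $\bpsi:=\bcurl\vv$; then $\div\bpsi=0$ automatically, and it only remains to verify the four conditions in the definition \eqref{face3-bis}. On each face $f$ one has $\bpsi\cdot\nn_f=\rot_f({\vv^{\tau}}_{|f})$, which lies in $\P_{\kr}(f)$ since ${\vv^{\tau}}_{|f}\in V^{\edge}_{\k}(f)$ (see \eqref{0-edgef}); the condition $\div\bpsi\in\P_{\k}(\PP)$ is trivial because $\div\bpsi=0$; $\bcurl\bpsi=\bcurl(\bcurl\vv)\in(\P_{\kr}(\PP))^3$ is part of the definition \eqref{edge3-bis} of $V^{\edge}_{\k}(\PP)$; and $\int_{\PP}\bpsi\cdot(\xx_{\PP}\wedge\pp_{\kr})\dPP=\int_{\PP}(\bcurl\vv)\cdot(\xx_{\PP}\wedge\pp_{\kr})\dPP=0$ is precisely the last constraint in \eqref{edge3-bis}. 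All of this is routine.

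Next I would obtain equality by comparing dimensions. For the left-hand side, $\dim\bcurl\,V^{\edge}_{\k}(\PP)=\dim V^{\edge}_{\k}(\PP)-\dim\big(\ker\bcurl\cap V^{\edge}_{\k}(\PP)\big)$; by \eqref{diagramVn-Ve} this kernel equals $\nabla V^{\nodal}_{1}(\PP)$, whose dimension is $\dim V^{\nodal}_{1}(\PP)-1=N_v-1$ (the $-1$ because $\PP$ is connected and $\P_0\subset V^{\nodal}_{1}(\PP)$, so a function in $V^{\nodal}_{1}(\PP)$ with vanishing gradient is constant), while $\dim V^{\edge}_{\k}(\PP)=N_e$ by unisolvence of the edge degrees of freedom \eqref{dof-3dek-1}; hence $\dim\bcurl\,V^{\edge}_{\k}(\PP)=N_e-N_v+1$. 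For the right-hand side, the map $\div:V^{\face}_{\k}(\PP)\to\P_{\k}(\PP)$ is onto, since by Remark \ref{AllRT3D} one has $\xx_{\PP}\in V^{\face}_{\k}(\PP)$ with $\div\xx_{\PP}=3$, so $\dim\{\bpsi\in V^{\face}_{\k}(\PP):\div\bpsi=0\}=\dim V^{\face}_{\k}(\PP)-1=N_f-1$. Euler's formula for the (simply connected) polyhedron $\PP$, namely $N_v-N_e+N_f=2$, gives $N_e-N_v+1=N_f-1$, so the two subspaces in \eqref{diagramVe-Vf} have the same finite dimension; together with the inclusion already proved, this yields the equality.

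I expect the only delicate point to be making sure that $\dim V^{\edge}_{\k}(\PP)=N_e$ and $\dim V^{\face}_{\k}(\PP)=N_f$ — that is, unisolvence of the degrees of freedom \eqref{dof-3dek-1} and \eqref{dof-3dfk-1}, which for these two spaces is inherited from \cite{SERE-mix} — and that Euler's relation is legitimately available under the standing simple-connectedness hypotheses on $\PP$ and its faces. A fully constructive proof of the reverse inclusion (take $\vv\in H(\bcurl;\PP)$ with $\bcurl\vv=\bpsi$, which exists because $\PP$ is simply connected and $\div\bpsi=0$, and then correct it by a suitable gradient so that it lands in $V^{\edge}_{\k}(\PP)$, in the spirit of the proof of \eqref{diagramVn-Ve}) is also possible, but it is considerably more laborious, so the dimension count is the route I would follow.
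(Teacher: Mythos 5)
Your proposal is correct and follows essentially the same route as the paper: the inclusion $\subseteq$ by direct verification of the conditions in \eqref{face3-bis}, and equality via the dimension count $\dim\bcurl\,V^{\edge}_{\k}(\PP)=N_e-(N_v-1)=N_f-1=\dim\{\bpsi\in V^{\face}_{\k}(\PP):\div\bpsi=0\}$ using \eqref{diagramVn-Ve} and Euler's formula. Your version is in fact marginally more explicit (you check the moment condition \eqref{pippo3} in the first inclusion and justify the surjectivity of $\div$ via $\xx_{\PP}$, where the paper instead observes that $\div\bpsi=0$ amounts to the single linear constraint $\sum_f\int_f\bpsi\cdot\nn_f\df=0$), but the substance is identical.
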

\begin{proof}
For every $\vv\in V^{\edge}_{\k}(\PP)$ we have that $\bpsi:=\bcurl \vv$ belongs to $V^{\face}_{\k}(\PP)$. Indeed, on each face $f$ we have that $\bpsi\cdot\nn_f(\equiv\rot_f\vv_{|f})$, from \eqref{0-edgef}
belongs to $\P_{\kr}(f)$ (as required in \eqref{face3-bis} );  moreover $\div\bpsi=0$ (obviously)
and $\bcurl\bpsi\in(\P_{\kr}(\PP))^3$ from \eqref{edge3-bis}. Hence,
\begin{equation}\label{diagramVe-Vf-b}
\bcurl \,V^{\edge}_{\k}(\PP) \subseteq \{ \bpsi \in V^{\face}_{\k}(\PP):~ \div \bpsi=0\}.
\end{equation}
We show the equality of the two sets by a dimensional count. It is immediate that the condition $\div \bpsi=0$, for $\bpsi \in V^{\face}_{\k}(\PP)$, is equivalent to 
$$
\sum_{f \in \partial \PP} \int_f \bpsi\cdot\nn_f \df = 0
$$
and thus, recalling \eqref{dof-3dfk-1},
\begin{equation}\label{fdim}
{\rm dim} \{ \bpsi \in V^{\face}_{\k}(\PP):~ \div \bpsi=0\} = N_f - 1 .
\end{equation}
By classical properties of linear operators,  using \eqref{diagramVn-Ve} and finally the Euler formula on polyhedrons, we get now
\begin{equation}\label{dybala}
\begin{aligned}
{\rm dim} (\bcurl (V^{\edge}_{\k}(\PP))) & =
{\rm dim} (V^{\edge}_{\k}(\PP)) - {\rm dim}  \{ \bpsi \in V^{\edge}_{\k}(\PP):~ \bcurl \bpsi =0\} \\ 
&=N_e- {\rm dim}(\nabla V^{node}_1)
 = N_e - (N_v -1) = N_f - 1{\color{red},}
\end{aligned}
\end{equation}
{and the} result follows from \eqref{fdim} and \eqref{dybala}.

%{\color{magenta} Conversely, given $\bpsi\in V^{\face}_{0}(\PP)$ with $\div\bpsi=0$ we first observe that taking its normal component on each face we have obviously an element  of $V^{disc}_0(\partial\PP)$  with zero mean value over $\partial\PP$ (from Gauss theorem
% and $\div\bpsi=0$). Then, proceeding as in Theorem 4 of \cite{super-misti}, from \eqref{etof2g} we have that there exists 
% an element $\vv^*\in V^{\edge}_0(\partial\PP)$ such that its face-by-face $\rot_f$ is equal to
% $\bpsi\cdot\nn$.  Taking the tangential components $\vv^*\cdot\tt$ on each edge we have
% from \eqref{dof-3dek-1} that there exists a unique $\vv\in V^{\edge}_0(\PP)$ such that
% $\vv\cdot\tt \equiv \vv^*\cdot\tt$ on each edge. In particular we will have that the tangential 
% components of $\vv$ will be equal to those of $\vv^*$ on each face. The $\bcurl$ of such $\vv$ will be an element of $V^{\face}_0(\PP)$ as we have seen already. But the (constant) normal components
% of $\bcurl\vv$ on each face are equal (Stokes theorem) to $\rot_f\vv$, and then coincide with $\rot_f\vv^*$ which is equal to $\bpsi\cdot\nn$, so that, finally, $\bcurl\vv\equiv\bpsi$ 
% (as they are two elements in $V^{\face}_0(\PP)$ with the same normal components).
%}
\end{proof}

% ---------------------------------------------------------------------------------
\subsection{The global spaces}
% ---------------------------------------------------------------------------------

Let $\Th$ be a decomposition of the computational domain $\Om$ 
into polyhedrons $\PP$.  Again, for the time being, we assume just that all polyhedrons and all their faces are simply connected. More detailed assumptions will be presented in Section \ref{theo:est}.
%,with the usual regularity assumptions
 Here we assume that 
 \begin{equation}\label{micost}
 \mbox{\it the permeability $\mu$ is constant on each $\PP$}.
 \end{equation}
  We define the {\it global spaces} as follows.
\begin{equation}
V^{\nodal}_{1}\equiv V^{\nodal}_{1}(\Om):=\Big\{ q \in H^1_0(\Om) \mbox{ such that } q_{|\PP} \in V^{\nodal}_{1}(\PP)\: \forall \PP \in \Th \Big\}, 
\end{equation}
 with the obvious degrees of freedom
\begin{equation}
\bullet \mbox{ for each vertex $\nu$: the nodal value $q(\nu).$ } \label{dof-3dnk-1G}\\
\end{equation}
For the global edge space we have
\begin{equation}
\!\!V^{\edge}_{0}\equiv V^{\edge}_{0}(\Om):=\Big\{ \vv \in H_0(\bcurl;\Om) \mbox{ such that } \vv_{|\PP} \in V^{\edge}_{0}(\PP)\: \forall \PP \in \Th \Big\},
\end{equation}
with the obvious degrees of freedom
\begin{equation}
 \bullet \mbox{ on each edge $e$: $\displaystyle\int_e \vv\cdot\tt_e \de .$ } \label{dof-3dek-1G}
\end{equation}
 Finally, for the face space we have: 
\begin{equation}\label{globf}
V^{\face}_{0}\equiv V^{\face}_{0}(\Om):=\Big\{\bpsi\in H_0(\div;\Om)\mbox{ such that }\bpsi_{|\PP}\in V^{\face}_{0}(\PP)\:\forall \PP \in \Th\Big\},
\end{equation}
with the degrees of freedom
\begin{equation}
 \bullet \mbox{ for each face $f$: $\displaystyle\int_f
\bpsi\cdot\nn_f  \df  $.} \label{dof-3dfk-1G}
\end{equation}
It is important to point out that
\begin{equation}\label{inclu3nek}
\nabla V^{\nodal}_{1}\subseteq V^{\edge}_{0}.
\end{equation}
In particular, recalling the local results \eqref{diagramVn-Ve}, it is easy to check that
\begin{equation}\label{rot03k}
\nabla V^{\nodal}_{1}\equiv\{ \vv\in V^{\edge}_{\k}\mbox { such that }\bcurl\vv=0\} .
\end{equation}
Similarly, we have
\begin{equation}\label{inclu3efk}
\bcurl \,V^{\edge}_{\k}\subseteq V^{\face}_{\k},
\end{equation}
and it can be checked (recalling the local results \eqref{diagramVe-Vf-b})
% and with an argument similar to Theorem 4 of \cite{super-misti}) 
that
\begin{equation}\label{rot03fk}
\bcurl \,V^{\edge}_{\k}\equiv\{ \bpsi\in V^{\face}_{\k}\mbox { such that }\div\bpsi=0\} .
\end{equation}
%
% PEZZO SULLE COSTANTI, LO HO TOLTO  
%
%{\color{blue} Introducing the additional space 
%\begin{equation}\label{pwc3k}
%V^c_{\k}:=\{\gamma\in L^2(\Om)\;\mbox{ such that }\gamma_{|\PP}\in \P_{\k}\;\forall\PP\in\Th\},
%\end{equation}
%%
% we also have
%\begin{equation}\label{inclu3fvk}
%\div V^{\rm{f}}_{\k}\equiv V^c_{\k}.
%\end{equation}
%}
%
\begin{remark}\label{catenak} We point out that the inclusions  \eqref{inclu3nek}, and \eqref{inclu3efk}  are
(in a sense) also {\bf practical}, and not only theoretical. By this, more specifically, we mean that: given the degrees of freedom
of a $q\in V^{\nodal}_{1}$ we can compute the corresponding degrees of freedom of $\nabla q$ in $V^{\edge}_{\k}$; and given the degrees of freedom of a $\vv\in V^{\edge}_{\k}$ we can compute the corresponding degrees of freedom of $\bcurl\,\vv$ in $V^{\face}_{\k}$. 
\hfill\qed
%
% PEZZO SULLE COSTANTI, CHE HO TOLTO
%
% finally (and this is almost obvious) from the degrees of freedom of a $\bphi\in V^{\rm{f}}_{\k}$ we can compute its divergence in each element and obtain an element in $V^c_{\k}$.\qed
%
\end{remark}
%Indeed, the inclusion $\rot V^{\rm{e}}\subseteq V^c$ is obvious. Let us see the converse. Given a $\gamma\in V^c$ we can easily find a $\ww\in (H^1_0(\Omega))^2$ such that $\rot\ww=\gamma$.
%Then we define $\ww_I\in V^{\rm{e}}$ through
%\begin{equation}\label{definterp}
%\int_{e}(\ww-\ww_I)\cdot\tt_e\de=0\qquad\forall \mbox{ edge   $e$ in }\Th
%\end{equation}
%and, if necessary (meaning: if we kept the internal degrees of freedom), 
%\begin{equation}\label{definterp0}
%\int_{\PP}(\ww-\ww_I)\cdot\xx\dE=0 \mbox{ on each element }\PP\in\Th.
%\end{equation}
% It is immediate to see that
%\begin{equation}\label{rotinterp}
%\int_{\PP}\rot(\ww-\ww_I)\dPP=\sum_{e\in\partial\E}\int_{e}(\ww-\ww_I)\cdot\tt_e\de\qquad\forall \PP\in\Th
%\end{equation}
%which implies that $\rot\ww_I=\gamma$.
%
\noindent  
From \eqref{PSe3k} we can  also define a global scalar product: 
\begin{equation}\label{PSe3gk}
[\vv,\ww]_{\edge}:=\sum_{\PP\in\Th} [\vv,\ww]_{{\edge},\PP}.
\end{equation}

We note that, recalling \eqref{PSe3k-1}, 
\begin{equation}\label{SP3boundsk}
\alpha_* (\vv,\vv)_{0,\Om}\le [ \vv,\vv]_{\edge}\le  \alpha^* (\vv,\vv)_{0,\Om}\qquad\forall \vv\in V^{\edge}_{\k} .
\end{equation}
It is also important to point out that, using \eqref{consiE3k} we have
\begin{equation}\label{consiE3gk}
[\vv,\pp]_{\edge}=( \vv,\pp)_{0,\Om}:=\int_{\Om} \vv\cdot\pp\dO \quad\forall \vv\in V^{\edge}_{\k}, \,\forall\pp\mbox{ piecewise in $(\P_{\k})^3$} .
\end{equation}

\noindent From \eqref{PSf3k} we can  also define a scalar product in $V^{\face}_{\k}$ in the obvious way
\begin{equation}\label{PSf3gk}
[\bpsi,\bphi]_{\face}:=\sum_{\PP\in\Th}[\bpsi,\bphi]_{{\face},\PP}
\end{equation}
and we note that 
{
\begin{equation}\label{SPf3boundsk}
\alpha_1 (\bpsi,\bpsi)_{0,\Om}\le [\bpsi,\bpsi]_{\face}\le \alpha_2 (\bpsi,\bpsi)_{0,\Om}\qquad\forall \bpsi\in V^{\face}_{\k}.
\end{equation}
}
It is also important to point out that, using \eqref{consiF3k} we have
\begin{equation}\label{consif3k}
[\bpsi,\pp]_{\face}=(\bpsi,\pp)_{0,\Om}:=\int_{\Om}\bpsi\cdot\pp\dO \qquad\forall \bpsi\in V^{\face}_{\k}, \,\forall\pp \mbox{ piecewise in $(\P_{\k})^3$} .
\end{equation}

%%%%%%%%%%%%%%%%%%%%%%%%%%%%%%%%%%%%%%%
%%%%%%%%%%%%%%%%%%%%%%%%%%%%%%%%%%%%%%%
%%%%%%%%%%%%%%%%%%%%%%%%%%%%%%%%%%%%%%%
%%%%%%%%%%%%%%%%%%%%%%%%%%%%%%%%%%%%%%%
%%%%%%%%%%%%%%%%%%%%%%%%%%%%%%%%%%%%%%%
%pipo

\section{Discretization of the Magneto-static Problem}\label{MS}

We are now ready to present the discretization of our Magneto-static Problem \eqref{K1_3}
that we recall here:
\begin{equation}\label{K1_3-dadi}
\left\{
\begin{aligned}
& \mbox{given }\jj \in H(\div;\Om) \quad(\mbox{with }\div \jj =0 \mbox{ in }\Om),\quad\mbox{ and }\mu=\mu(\xx)\ge \mu_0>0,\\
& \mbox {find  }\HH\in H_0(\bcurl;\Om) \mbox{ and }p\in H^1_0(\Om) \mbox{ such that: }\\
& \int_{\Om}\bcurl\HH\cdot\bcurl\vv\dO+\int_{\Om}\nabla p\cdot\mu\vv\dO=\int_{\Om}\jj\cdot\bcurl\vv\dO
\quad\forall\vv\in H_0(\bcurl;\Om)\\
& \int_{\Om}\nabla q\cdot\mu\HH\dO=0\quad\forall q\in H^1_0(\Om).\\ 
\end{aligned}
\right.
\end{equation}
It is easy to check, by the usual theory of mixed methods, that \eqref{K1_3-dadi} has a unique solution $(\HH,p)$. Then we check that
$\HH$ and $\mu\HH$  give the solution of  \eqref{Max3}  and $p=0$.  Checking  that $p=0$ is immediate, just taking $\vv=\nabla p$ in the first equation. Once we know that $p=0$, the first equation gives $\bcurl\HH=\jj$, and then the second equation gives $\div\mu\HH=0$.

\begin{remark}
We observe that an alternative variational formulation could be 
\begin{equation}\label{K1_3-Hgrad}
\left\{
\begin{aligned}
& \mbox{given }\jj \in H(\div;\Om) \quad(\mbox{with }\div \jj =0 \mbox{ in }\Om),\quad\mbox{ and }\mu=\mu(\xx)\ge \mu_0>0,\\
& \mbox {find  }\HH\in H_0(\bcurl;\Om) \mbox{ and }p\in H^1_0(\Om) \mbox{ such that: }\\
& \int_{\Om}\bcurl\HH\cdot\bcurl\vv\dO+\int_{\Om}\nabla p\cdot\mu\vv\dO=\int_{\Om}\jj\cdot\bcurl\vv\dO
\quad\forall\vv\in H_0(\bcurl;\Om)\\
& \int_{\Om}\nabla q\cdot\mu\HH\dO \,{-} \int_{\Om} \nabla p \cdot \nabla q \dO=0\quad\forall q\in H^1_0(\Om),\\ 
\end{aligned}
\right.
\end{equation}
{or other possible variants mimicking, one way or another, the Hodge-Laplacian approach  (see \cite{AFW-Acta}).
We observe that the discretization that we are going to introduce for \eqref{K1_3-dadi}  will apply to \eqref{K1_3-Hgrad} as well}.
\end{remark}
\noindent %Given $\jj \in H_0(\div;\Om)$ with $\div\jj=0$, 
We first construct the interpolant $\jj_I\in V^{\face}_{\k}$ of $\jj$ that matches  the degrees of freedom \eqref{dof-3dfk-1G}:%--\eqref{dof-3dfk-2G}:
%\begin{align}
%& \bullet\quad \mbox{for each $f$: $\displaystyle\int_f
%((\jj-\jj_I)\cdot\nn) p_{k-1} \df=0 \quad \forall p_{k-1} \in \P_{k-1}(f) $} \label{dof-3dfk-1I}\\
%& \bullet\quad \mbox{for  $k\ge 2$, for each $\PP$,}\displaystyle\int_{\PP}
%(\jj-\jj_I)\cdot(\bgrad p_{k-1}) \dPP =0\quad \forall p_{k-1} \in \P_{k-1}(\PP) \label{dof-3dfk-2I} \\
%& \bullet\quad\mbox{for $ k\ge 1$, for each $\PP$,}\displaystyle\int_{\PP}
%(\jj-\jj_I)\cdot (\xx_{\PP}\wedge {\bf p}_{k-1}) \dPP =0\quad \forall p_{k-1} \in \P_{k-1}(\PP)
% \label{dof-3dfk-3I} 
%\end{align}
%
\begin{equation}
\bullet \mbox{ for each face $f$: $\displaystyle\int_f
(\jj-\jj_I)\cdot\nn_f  \df=0  $.} \label{dof-3dfk-1GI}
%& \bullet\mbox{ for each element $\PP$:}%\mbox{for } k\ge 1
%\displaystyle\int_{\PP}
%(\jj-\jj_I)\cdot (\xx_{\PP}\wedge {\pp}_{\k}) \dPP=0 \; \forall {\pp}_{\k} \in (\P_{\k}(\PP))^3.
% \label{dof-3dfk-3GI}
\end{equation}
From  the d.o.f. \eqref{dof-3dfk-1GI} and an integration by parts it follows that
\begin{equation}\label{commutface}
\int_{\PP} \div (\jj-\jj_I)\,\dPP=0\qquad {\forall\,\PP\in \Th }.
\end{equation}
Moreover, they also imply that $\jj_I\in H_0(\div;\Om)$ and that $\div\jj_I=0$ in $\Om$. Hence, according to \eqref{rot03fk}, we have that
$\jj_I$ will be the $\bcurl$ of some $\ww^*\in V^{\edge}_{\k}$:
\begin{equation}\label{avemariak}
\exists \ww^*\in V^{\edge}_{\k} \mbox{ such that } \bcurl\ww^*=\jj_I.
\end{equation}
Then we can introduce the {\bf discretization} of 
\eqref{K1_3}:
\begin{equation}\label{K1_3hk}
\left\{
\begin{aligned}
& \mbox {find  }\HH_h\in V^{\edge}_{\k} \mbox{ and }p_h\in V^{\nodal}_{1} \mbox{ such that: }\\
& [\bcurl\HH_h,\bcurl\vv]_{\face}+ [\nabla p_h,\mu\vv]_{\edge}=[\jj_I,\bcurl\vv]_{\face}
\quad\forall\vv\in V^{\edge}_{\k}\\
& [\nabla q,\mu\HH_h]_{\edge}=0\quad\forall q\in V^{\nodal}_{1}.\\ 
\end{aligned}
\right.
\end{equation}
We point out that both $\bcurl\HH_h$ and $\bcurl\vv$ (as well as $\jj_I$) are {\it face Virtual Elements in} $V^{\face}_{\k}(\PP)$ in each polyhedron $\PP$, so that
(taking also into account Remark \ref{catenak}) their {\it face} scalar products are computable as in \eqref{PSf3gk}.  Similarly, from the degrees of freedom of  a $q\in V^{\nodal}_{1}$ we can compute the degrees of freedom of
$\nabla q$, as an element of $V^{\edge}_{\k}$, so that the two edge-scalar products that appear in \eqref{K1_3hk} are computable as in \eqref{PSe3gk}. \\

\begin{prop}
Problem \eqref{K1_3hk} has a unique solution $(\HH_h,p_h)$, and $p_h\equiv 0$.
\end{prop}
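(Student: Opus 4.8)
The plan is to read \eqref{K1_3hk} as an abstract mixed (saddle‑point) problem in the two \emph{finite‑dimensional} spaces $V^{\edge}_{\k}$, equipped with the norm $\|\vv\|_{H(\bcurl)}:=(\|\vv\|^2_{0,\Om}+\|\bcurl\vv\|^2_{0,\Om})^{1/2}$, and $V^{\nodal}_{1}$, equipped with $\|\nabla q\|_{0,\Om}$ (a genuine norm there, since $q=0$ on $\partial\Om$), with bilinear forms $a(\HH,\vv):=[\bcurl\HH,\bcurl\vv]_{\face}$ and $b(\vv,q):=[\nabla q,\mu\vv]_{\edge}$ and right‑hand side $\vv\mapsto[\jj_I,\bcurl\vv]_{\face}$. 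By \eqref{SPf3boundsk} the form $a$ is symmetric, continuous and nonnegative, and by \eqref{SP3boundsk} the form $b$ is continuous. Hence, by the standard theory of mixed methods, to get existence and uniqueness of $(\HH_h,p_h)$ it suffices to verify a discrete inf‑sup condition for $b$ and coercivity of $a$ on the kernel of $b$.

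For the inf‑sup condition I would proceed exactly as in the continuous case: given $q\in V^{\nodal}_{1}$, by \eqref{inclu3nek} the function $\nabla q$ belongs to $V^{\edge}_{\k}$, and since $\mu$ is constant on each $\PP$ (see \eqref{micost}) and $[\cdot,\cdot]_{\edge}$ is the sum of the nonnegative element forms,
\[
b(\nabla q,q)=[\nabla q,\mu\nabla q]_{\edge}=\sum_{\PP\in\Th}\mu_{\PP}\,[\nabla q,\nabla q]_{\edge,\PP}\ \ge\ \mu_0\,[\nabla q,\nabla q]_{\edge}\ \ge\ \mu_0\,\alpha_*\,\|\nabla q\|^2_{0,\Om},
\]
using $\mu_{\PP}\ge\mu_0>0$ and \eqref{SP3boundsk}. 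Because $\bcurl\nabla q=0$ we have $\|\nabla q\|_{H(\bcurl)}=\|\nabla q\|_{0,\Om}$, so testing the supremum with $\vv=\nabla q$ gives $\sup_{\vv\neq 0} b(\vv,q)/\|\vv\|_{H(\bcurl)}\ge \mu_0\,\alpha_*\,\|\nabla q\|_{0,\Om}$, which is the desired inf‑sup with constant $\mu_0\alpha_*$.

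For the coercivity of $a$ on $K_h:=\{\vv\in V^{\edge}_{\k}:\ b(\vv,q)=0\ \forall q\in V^{\nodal}_{1}\}$: since $a$ is symmetric and nonnegative on the finite‑dimensional $K_h$, it is enough to check it is positive definite there. If $\vv\in K_h$ and $a(\vv,\vv)=[\bcurl\vv,\bcurl\vv]_{\face}=0$, then $\bcurl\vv=0$ by \eqref{SPf3boundsk}, so the discrete exact‑sequence identity \eqref{rot03k} gives $\vv=\nabla q_0$ for some $q_0\in V^{\nodal}_{1}$; inserting $q=q_0$ in the condition defining $K_h$ yields $[\nabla q_0,\mu\nabla q_0]_{\edge}=0$, hence (by the estimate above) $\nabla q_0=0$, i.e. $\vv=0$. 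This is the one genuinely non‑routine point — note that $a$ is \emph{not} coercive on all of $V^{\edge}_{\k}$, since every discrete gradient lies in its kernel, so the argument really needs the discrete exact sequence \eqref{rot03k}; the rest is a bookkeeping exercise with the already‑established norm equivalences. The two conditions verified, the mixed‑method theory gives the unique solvability of \eqref{K1_3hk}. Finally, for $p_h\equiv0$ I would mimic the continuous argument: take $\vv=\nabla p_h\in V^{\edge}_{\k}$ in the first equation of \eqref{K1_3hk}; since $\bcurl\nabla p_h=0$, both the first term and the right‑hand side vanish, leaving $[\nabla p_h,\mu\nabla p_h]_{\edge}=0$, which by \eqref{SP3boundsk} and $\mu\ge\mu_0>0$ forces $\nabla p_h=0$, and since $p_h\in V^{\nodal}_{1}\subset H^1_0(\Om)$ we conclude $p_h\equiv0$.
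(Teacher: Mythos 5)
Your proposal is correct, and it rests on exactly the same two ingredients as the paper's own proof: testing with discrete gradients (legitimate by \eqref{inclu3nek}) combined with the norm equivalence \eqref{SP3boundsk}, and the discrete exact-sequence identity \eqref{rot03k} to identify curl-free elements of $V^{\edge}_{\k}$ with gradients of elements of $V^{\nodal}_{1}$. The difference is one of packaging. The paper argues directly: it first obtains $p_h\equiv 0$ by taking $\vv=\nabla p_h$ in the first equation, then shows that the homogeneous problem ($\jj_I=0$) has only the trivial solution --- $\bcurl\,{\overline\HH}_h=0$ by \eqref{SPf3boundsk}, hence ${\overline\HH}_h=\nabla q^*_h$ by \eqref{rot03k}, hence ${\overline\HH}_h=0$ by the second equation and \eqref{SP3boundsk} --- and existence then comes for free because \eqref{K1_3hk} is a square finite-dimensional linear system. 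You instead verify the two Babu\v{s}ka--Brezzi conditions (a discrete inf-sup for $b$ and positive-definiteness, hence coercivity by finite dimensionality, of $a$ on the discrete kernel) and invoke the abstract mixed theory; your kernel argument is literally the paper's uniqueness argument in disguise, and your inf-sup test function $\nabla q$ plays the role of the paper's $\nabla p_h$. What your route buys is an explicit inf-sup constant $\mu_0\alpha_*$; what it costs is that the kernel-coercivity constant obtained from finite-dimensionality alone is not visibly uniform in $h$, so it cannot replace the separate error analysis of Section \ref{theo:est}. For the statement as given (existence, uniqueness, and $p_h\equiv0$) both arguments are complete.
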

\begin{proof}
Taking $\vv=\nabla p_h$ (as we did for the continuous problem \eqref{K1_3-dadi}) in the first equation, and using \eqref{SP3boundsk}  we easily obtain $p_h\equiv0$ for \eqref{K1_3hk} as well. To prove uniqueness of $\HH_h$, set $\jj_I=0$, and let ${\overline \HH}_h$ be the solution of the homogeneous problem. From the first equation we deduce that $\curl\,{\overline \HH}_h=0$. Hence, from \eqref{rot03k} we have ${\overline \HH}_h= \nabla q^*_h$ for some $q^*_h \in V^{\nodal}_{1}$. The second equation and \eqref{SP3boundsk} give then ${\overline \HH}_h=0$.
\end{proof}

 Once we know that $p_h=0$, the first equation of \eqref{K1_3hk} reads
 \begin{equation}
 [\bcurl\HH_h,\bcurl\vv]_{\face}=[\jj_I,\bcurl\vv]_{\face} \;\forall \vv\in V^{\edge}_{\k},
 \end{equation}
 that in view of \eqref{avemariak} becomes
  \begin{equation}
 [\bcurl\HH_h-\bcurl\ww^*,\bcurl\vv]_{\face}=0
 \quad\forall \vv\in V^{\edge}_{\k}.
 \end{equation}
 Using $\vv=\HH_h-\ww^*$ and \eqref{SPf3boundsk}, this easily implies
   \begin{equation}\label{bingo3k}
 \bcurl\HH_h=\bcurl\ww^*=\jj_I .
 \end{equation}

% ------------------------------------------------------------   
\subsection{Error estimates}\label{theo:est}
% ------------------------------------------------------------

For the theoretical derivations we consider the following mesh assumptions, that are quite standard in the VEM literature. We assume the existence of a positive constant $\gamma$ such that any polyhedron $\PP$  (of diameter $h_\PP$) satisfies the following conditions:
\begin{enumerate}
\item $\PP$ is star shaped with respect to a ball of radius bigger than $\gamma h_\PP$;
\item any face $f \in \partial\PP$ is star shaped with respect to a ball of radius bigger than 
$\gamma h_P$, and every edge of $\PP$ has length bigger than $\gamma h_\PP$. 
\end{enumerate}
We note that condition 1 (and 2) implies that $\PP$ (and any face of $\PP$) is simply connected.  At the theoretical level, some of the above conditions could be avoided by using more technical arguments. At the practical level, as shown by the numerical tests of the Section \ref{numres}, condition 2 is negligible since the method seems essentially impervious to degeneration of faces and edges. On the contrary, although the scheme is quite robust to distortion of the elements, condition 1 is more relevant since extremely anisotropic element shapes can lead to poor results. We also  recall that we assumed $\mu$ to be piecewise constant (see \eqref{micost}).

Let us bound the error $\HH-\HH_h$. We start by defining the interpolant $\HH_I\in V^{\edge}_{\k}$ of $\HH$, defined through the degrees of freedom \eqref{dof-3dek-1G}:%-\eqref{dof-3dek-5G}:
\begin{equation}
\bullet\mbox{ on each edge $e$: $\displaystyle\int_e (\HH-\HH_I)\cdot\tt_e \ds=0 . $ } \label{intHH1}
%& \bullet\mbox{ on each element $\PP$}:\displaystyle\int_{\PP}
%\bcurl(\HH-\HH_I)\cdot(\xx_{\PP}\wedge {\pp}_{\k})\,\dPP=0 \quad \forall {\pp}_{\k} \in (\P_{\k}(\PP))^3.  \label{intHH5}
\end{equation}

\begin{prop} With the choices  \eqref{dof-3dfk-1GI}
and  \eqref{intHH1}  we have
\begin{equation}\label{hailmaryk}
\bcurl\HH_I=\jj_I.
\end{equation}
\end{prop}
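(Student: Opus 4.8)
The plan is to show that $\bcurl \HH_I$ and $\jj_I$ are both elements of $V^{\face}_{\k}$ that share the same degrees of freedom \eqref{dof-3dfk-1G}, hence they coincide by unisolvence. First I would note that, by the inclusion \eqref{inclu3efk}, $\bcurl \HH_I \in V^{\face}_{\k}$ since $\HH_I \in V^{\edge}_{\k}$; also $\jj_I \in V^{\face}_{\k}$ by construction. Since a function in $V^{\face}_{\k}$ is uniquely determined by the moments $\int_f \bpsi \cdot \nn_f \df$ over the faces $f$ of the decomposition, it suffices to check that these moments agree for $\bcurl \HH_I$ and for $\jj_I$.

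The key computation is a face-by-face application of Stokes' theorem. For an interior face $f$ shared by two polyhedrons (or a boundary face), one has
\begin{equation}\label{stokes-face-proof}
\int_f \bcurl \HH_I \cdot \nn_f \df = \int_{\partial f} \HH_I \cdot \tt_e \ds = \sum_{e \subset \partial f} \int_e \HH_I \cdot \tt_e \de.
\end{equation}
By the defining property \eqref{intHH1} of the interpolant $\HH_I$, each edge moment $\int_e \HH_I \cdot \tt_e \de$ equals $\int_e \HH \cdot \tt_e \de$. Hence the right-hand side of \eqref{stokes-face-proof} equals $\int_{\partial f} \HH \cdot \tt_e \ds$, which by Stokes' theorem again equals $\int_f \bcurl \HH \cdot \nn_f \df = \int_f \jj \cdot \nn_f \df$, using the governing equation $\bcurl \HH = \jj$ from \eqref{Max3}. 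Finally, by the defining property \eqref{dof-3dfk-1GI} of $\jj_I$, this last quantity equals $\int_f \jj_I \cdot \nn_f \df$. Chaining these equalities gives $\int_f \bcurl \HH_I \cdot \nn_f \df = \int_f \jj_I \cdot \nn_f \df$ for every face $f$.

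Since both $\bcurl \HH_I$ and $\jj_I$ belong to $V^{\face}_{\k}$ and their face degrees of freedom \eqref{dof-3dfk-1G} coincide, unisolvence of those degrees of freedom forces $\bcurl \HH_I = \jj_I$. The only point requiring a little care is the invocation of the continuous identity $\bcurl \HH = \jj$; this is legitimate because $\HH$ solves \eqref{Max3} (equivalently, $p=0$ in the continuous Kikuchi formulation \eqref{K1_3}), and $\jj \in H(\div;\Om)$ has well-defined normal traces on faces, so all the surface integrals above make sense. I do not anticipate a genuine obstacle here: the proof is essentially the statement that the interpolation operators commute with the differential operators (a discrete exact-sequence / commuting-diagram property), and Stokes' theorem applied edge-by-edge and face-by-face does all the work.
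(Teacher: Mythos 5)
Your proposal is correct and follows essentially the same route as the paper: both arguments reduce the claim to showing that the face degrees of freedom of $\bcurl\HH_I$ and $\jj_I$ coincide, and both verify this by applying Stokes' theorem on each face (identifying the normal component of the curl with the two-dimensional rotor of the tangential trace) together with the defining interpolation conditions \eqref{intHH1} and \eqref{dof-3dfk-1GI} and the relation $\bcurl\HH=\jj$. The only cosmetic difference is that the paper phrases the computation as showing the d.o.f.\ of the difference $\bcurl\HH_I-\jj_I$ vanish, whereas you chain the equalities directly.
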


\begin{proof} From \eqref{rot03fk} we know that $\bcurl\HH_I \in V^{\face}_{\k}$. To prove \eqref{hailmaryk} we should just show that the {\it face degrees of freedom} \eqref{dof-3dfk-1G} of the difference  $\bcurl\HH_I - \jj_I$
are  zero, that is:
%\vfill\eject
\begin{equation}
 \mbox{$\forall f$: $\displaystyle\int_f
(\bcurl\HH_I -\jj_I)\cdot\nn_f \df =0 $.} \label{dof-3dfk-1Gp}
%& \bullet\,\forall\PP:\displaystyle\int_{\PP}
%(\bcurl\HH_I - \jj_I)\cdot (\xx_{\PP}\wedge {\pp}_{\k}) \dPP =0\quad \forall \pp_{\k}\! \in \!(\P_{\k}(\PP))^3.
% \label{dof-3dfk-3Gp}
\end{equation}
Since $\jj=\bcurl\HH$, from  the interpolation  formulas \eqref{dof-3dfk-1GI} we see that 
$$
\forall f:~\int_f (\bcurl \HH-\jj_I)\cdot\nn_f  \df=0,
$$
so that we can replace $\jj_I$ with $\bcurl\HH$ in \eqref{dof-3dfk-1Gp}, that becomes
\begin{equation}
  \forall f:~\displaystyle\int_f
\bcurl(\HH_I-\HH)\cdot\nn_f \, \df =0.  \label{ave1}
%& \bullet\displaystyle\int_{\PP}
%\bcurl(\HH_I-\HH)\cdot (\xx_{\PP}\wedge {\pp}_{\k}) \dPP =0\; \forall {\pp}_{\k} \in (\P_{\k}(\PP))^3. \label{ave3}
\end{equation}
Observing that \eqref{intHH1}  implies that
$$
\int_f
\rot_f(\HH-\HH_I)_{|f}\,  \df =0,
$$
and recalling that  on each $f$ the normal component of $\bcurl(\HH_I -\HH)$ is equal to the $\rot_f$ of the tangential components $(\HH_I-\HH)_{|f}$, we deduce
\begin{equation*}
\int_f \bcurl(\HH_I -\HH)\cdot\nn_f  \df\equiv\int_f\rot_f(\HH_I-\HH)_{|f}\,\df =0.
\end{equation*}
Hence, \eqref{ave1} is satisfied, and the proof is concluded.
\end{proof}
From \eqref{bingo3k} and \eqref{hailmaryk} it follows then
\begin{equation}\label{roteq3Dk}
\bcurl (\HH_I-\HH_h)=0
\end{equation}
and therefore, from \eqref{rot03k},
\begin{equation}\label{eungrad3Dk}
\HH_I-\HH_h= \nabla q^*_h \mbox{ for some }q^*_h\in V^{\nodal}_1 .
\end{equation}
Now we define an alternative $(L^2(\Om))^3$ inner product and norm that take into account the (piecewise constant) value of the permeability $\mu$. We set, for $\vv$ in $(L^2(\Om))^3$,
\begin{equation}\label{newnorm}
\tbn\vv\tbn_{0,\Om}^2
%\equiv\dba\vv,\vv\dbc_{0,\Om}
:=\int_{\Om}\mu\,|\vv|^2\dO.
\end{equation}
When $\vv=\HH$ in \eqref{newnorm} we get that $\tbn\HH\tbn_{0,\Om}^2=\int_{\Om}
\BB\cdot\HH\dO$, showing the connection between the new norm and the energy.
We now note that, using \eqref{SP3boundsk}, we have
\begin{equation}\label{stima03Dk}
 \alpha_*\tbn\HH_I-\HH_h\tbn_{0,\Om}^2\le [\HH_I-\HH_h, \mu(\HH_I-\HH_h)]_{\edge} ,
\end{equation}
and also (from \eqref{consiE3gk} and \eqref{micost})
\begin{equation}\label{consimu}
[\pp_0,\mu\vv]_{\edge}=\!
%\dba\pp_0,\vv\dbc_{0,\Om}
(\pp_0,\mu\vv)_{0,\Om}\quad\forall \vv\in V^{edge}_0 \mbox{ and }
\forall \pp_0 \mbox{ piecewise constant vector}.
\end{equation}
Then, starting with \eqref{stima03Dk} we have:
\begin{equation*}
%\label{stima1}
\begin{aligned}
 \alpha_*\tbn&\HH_I-\HH_h\tbn_{0,\Om}^2\le[\HH_I-\HH_h, \mu(\HH_I-\HH_h)]_{\edge} \quad\\[2mm]
&=\mbox{ (using \eqref{eungrad3Dk}) } [\HH_I-\HH_h, \mu\nabla q^*_h]_{\edge}\quad\\[2mm]
&=\mbox{ (using the second of \eqref{K1_3hk}) } [\HH_I, \mu\nabla q^*_h]_{\edge}\quad\\[2mm]
&=\mbox{ (adding and subtracting  $\Pi_{\k}\HH$) } [\HH_I-\Pi_{\k}\HH, \mu\nabla q^*_h]_{\edge}+[\Pi_{\k}\HH, \mu\nabla q^*_h]_{\edge}\qquad\\[2mm]
&= \mbox{(using \eqref{consimu}) }[\HH_I-\Pi_{\k}\HH, \mu\nabla q^*_h]_{\edge}+(\Pi_{\k}\HH, \mu \nabla q^*_h)_{0,\Om}\quad\\[2mm]
&=\mbox{ (adding and subtracting  $\HH$) }[\HH_I-\Pi_{\k}\HH, \mu\nabla q^*_h]_{\edge}+(\Pi_{\k}\HH-\HH, \mu \nabla q^*_h)_{0,\Om}
+ (\HH, \mu \nabla q^*_h)_{0,\Om}\quad\\[2mm]
&=\mbox{ (from the second of \eqref{K1_3}) }[\HH_I-\Pi_{\k}\HH,\mu \nabla q^*_h]_{\edge}+(\Pi_{\k}\HH-\HH, \mu \nabla q^*_h)_{0,\Om}
\quad\\[2mm]
&\le\mbox{ (using Cauchy-Schwarz and \eqref{SP3boundsk}) }\Big( \alpha^* \,\tbn\HH_I-\Pi_{\k}\HH\tbn_{0,\Om}+  \tbn\Pi_{\k}\HH-\HH\tbn_{0,\Om}\Big)\,\tbn\nabla q^*_h\tbn_{0,\Om}\quad\\[2mm]
&\le \mbox{ (using again \eqref{eungrad3Dk}) }\Big( \alpha^*\,\tbn\HH_I-\Pi_{\k}\HH\tbn_{0,\Om}+  \tbn\Pi_{\k}\HH-\HH\tbn_{0,\Om}\Big)\,\tbn\HH_I-\HH_h\tbn_{0,\Om}
\end{aligned}
\end{equation*}
that implies immediately (since $\alpha^* \ge 1$)
\begin{equation}
\tbn\HH_I-\HH_h\tbn_{0,\Om}\le\frac{\alpha^*}{\alpha_*}(\tbn\HH_I-\Pi_{\k}\HH\tbn_{0,\Om}+\tbn\Pi_{\k}\HH-\HH\tbn_{0,\Om}).
\end{equation}
We can summarize the result in the following theorem.
\begin{thm}
Problem \eqref{K1_3hk} has a unique solution and the following estimate holds:
\begin{equation}\label{errorL2}
\tbn\HH-\HH_h\tbn_{0,\Om}\le C\,\Big(\tbn\HH-\HH_I\tbn_{0,\Om}+\tbn\HH-\Pi_{\k}\HH\tbn_{0,\Om}\Big),
\end{equation}
with $C$ a constant independent of the mesh size.
Moreover, thanks to \eqref{bingo3k} we also have
\begin{equation}
\|\bcurl(\HH-\HH_h)\|_{0,\Om}=\|\jj-\jj_I\|_{0,\Om}.
\end{equation}
%so that
%\begin{equation}\label{errorCurl}
%\|\HH-\HH_h\|_{H(\bcurl;\Om)}\le C\,\Big(\|\HH-\HH_I\|_{0,\Om}+\|\HH-\Pi_{\k}\HH\|_{0,\Om}+\| \jj-\jj_I\|_{0,\Om}\Big).
%\end{equation}
\end{thm}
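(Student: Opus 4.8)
The plan is to assemble ingredients that are already in place rather than to prove anything from scratch. Existence and uniqueness of the pair $(\HH_h,p_h)$, together with $p_h\equiv 0$, were established above (the Proposition stated right after \eqref{K1_3hk}), so no further work is required on that point; it remains only to derive the two error relations in the statement.

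For the bound \eqref{errorL2} I would first split the error by the triangle inequality,
\[
\tbn\HH-\HH_h\tbn_{0,\Om}\le \tbn\HH-\HH_I\tbn_{0,\Om}+\tbn\HH_I-\HH_h\tbn_{0,\Om},
\]
and then insert the estimate for the second term produced by the chain of identities displayed just before the theorem, namely
\[
\tbn\HH_I-\HH_h\tbn_{0,\Om}\le\frac{\alpha^*}{\alpha_*}\big(\tbn\HH_I-\Pi_{\k}\HH\tbn_{0,\Om}+\tbn\Pi_{\k}\HH-\HH\tbn_{0,\Om}\big).
\]
A further triangle inequality $\tbn\HH_I-\Pi_{\k}\HH\tbn_{0,\Om}\le\tbn\HH_I-\HH\tbn_{0,\Om}+\tbn\HH-\Pi_{\k}\HH\tbn_{0,\Om}$ then collapses everything onto the two quantities on the right-hand side of \eqref{errorL2}, with, for instance, $C=1+2\alpha^*/\alpha_*$. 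The constant is independent of the mesh size precisely because $\alpha_*,\alpha^*$ are, by \eqref{SP3boundsk} (which in turn rests on the mesh-regularity assumptions 1--2 of Section \ref{theo:est}). The curl identity is immediate: once $p\equiv 0$, the first equation of \eqref{K1_3-dadi} gives $\bcurl\HH=\jj$, while \eqref{bingo3k} gives $\bcurl\HH_h=\jj_I$; subtracting yields $\bcurl(\HH-\HH_h)=\jj-\jj_I$ pointwise, hence $\|\bcurl(\HH-\HH_h)\|_{0,\Om}=\|\jj-\jj_I\|_{0,\Om}$.

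The only genuinely substantial input is the chain of identities leading to the bound on $\tbn\HH_I-\HH_h\tbn_{0,\Om}$, so in writing the proof the ``hard part'' is simply to quote its ingredients in the correct order: the commuting property $\bcurl\HH_I=\jj_I$ from \eqref{hailmaryk} forces $\bcurl(\HH_I-\HH_h)=0$, whence by the discrete exact sequence \eqref{rot03k} one has $\HH_I-\HH_h=\nabla q^*_h$ for some $q^*_h\in V^{\nodal}_{1}$; testing the second equations of both the discrete problem \eqref{K1_3hk} and the continuous problem \eqref{K1_3-dadi} with this particular gradient cancels the consistency term, leaving only interpolation and $L^2$-projection errors, to be estimated by Cauchy--Schwarz together with the norm equivalence \eqref{SP3boundsk} and the consistency identity \eqref{consimu}. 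Everything else is the routine triangle inequalities indicated above.
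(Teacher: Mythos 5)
Your proposal is correct and follows essentially the same route as the paper: the theorem there is just a summary of the chain of identities derived immediately before it (using \eqref{hailmaryk}, \eqref{rot03k}, the two second equations, \eqref{consimu}, Cauchy--Schwarz and \eqref{SP3boundsk}), combined with the triangle inequality and with $\bcurl\HH=\jj$, $\bcurl\HH_h=\jj_I$ for the curl identity. Your extra remark that one more triangle inequality is needed to convert $\tbn\HH_I-\Pi_{\k}\HH\tbn_{0,\Om}$ into the two terms appearing in \eqref{errorL2} is a small but accurate point that the paper leaves implicit.
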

The above result can be combined with standard polynomial approximation estimates on  star shaped polyhedra in order to estimate the terms involving the $L^2$ projection on polynomials. 
{Moreover, approximation estimates for the VEM interpolants $\HH_I$ and $\jj_I$ can be derived (under the mesh assumptions at the beginning of this section) by an extension of the arguments in 
\cite{Acoustic}, \cite{BLR-stab}, \cite{Brenner}, \cite{Steklov-VEM}.}
 We therefore obtain, provided that $\HH$ and $\jj$ are sufficiently regular,
$$
\tbn\HH-\HH_h\tbn_{0,\Om}\le C h 
%|\mu^{1/2}\HH|_{1,\Om}  , 
\qquad 
\|\bcurl(\HH-\HH_h)\|_{0,\Om} \le C h. 
%|\jj|_{1,\Om} .
$$

\section{Numerical results}
\newcommand{\normConsideredForPh}{$l^\infty-$norm~}
\newcommand{\FLUX}{\textsl{FLUX2D}~}
\label{numres}
%---------------------------------------------------------------------------------
In this section we provide some numerical results.
{We first provide a test on a polyhedral domain with Dirichlet  boundary data, then a test on a cylindrical domain with jumping coefficients and Neumann boundary data, and finally a benchmark example with a more complex geometry.}

\subsection{Test case 1: $h$-analysis with homogeneous Dirichlet boundary conditions}

In this subsection we set $\mu=1$ and take as exact solution of~ \eqref{Max3}
the vector field  
$$
\HH(x,\,y,\,z) := \frac{1}{\pi}	\left(\begin{array}{r}
				\sin(\pi y) - \sin(\pi z)\\
				\sin(\pi z) - \sin(\pi x)\\
				\sin(\pi x) - \sin(\pi y)
				\end{array}\right)\,.
$$
We consider as domain $\Omega$ the truncated octahedron~\cite{patagonGeo} and three different  discretizations, 
see Fig.~\ref{fig:meshesTO}:
\begin{itemize}
 \item \textbf{Structured}, a mesh composed by structured cubes inside $\Omega$ and 
 arbitrarily shaped elements close to the boundary;
 \item \textbf{CVT}, a Centroidal Voronoi Tessellation of $\Omega$ obtained 
 via a standard Lloyd algorithm~\cite{cvtPaper};
 \item \textbf{Random}, a mesh obtained 
 by the constrained Voronoi Tessellation of points randomly put inside  $\Omega$.
\end{itemize}
{
The first type of mesh can be generated by using a Voronoi generation algorithm with seeds disposed in a regular formation, see \cite{apollo11}. This approach allows to approximate complex geometries and still inherits many among the computational advantages of regular cubical meshes.
The last type of meshes is instead interesting in order to check the robustness of the method, since it exhibits edges and faces (see the details in Fig.~\ref{fig:meshesTO}) that can be very small with respect to the size of the parent element.}
{To get such discretizations, we use the c++ library \texttt{voro++}~\cite{voroPlusPlus}.}

\begin{figure}[!htb]
\begin{center}
\begin{tabular}{ccc}
Structured &CVT &Random\\\\ 
\includegraphics[width=0.31\textwidth]{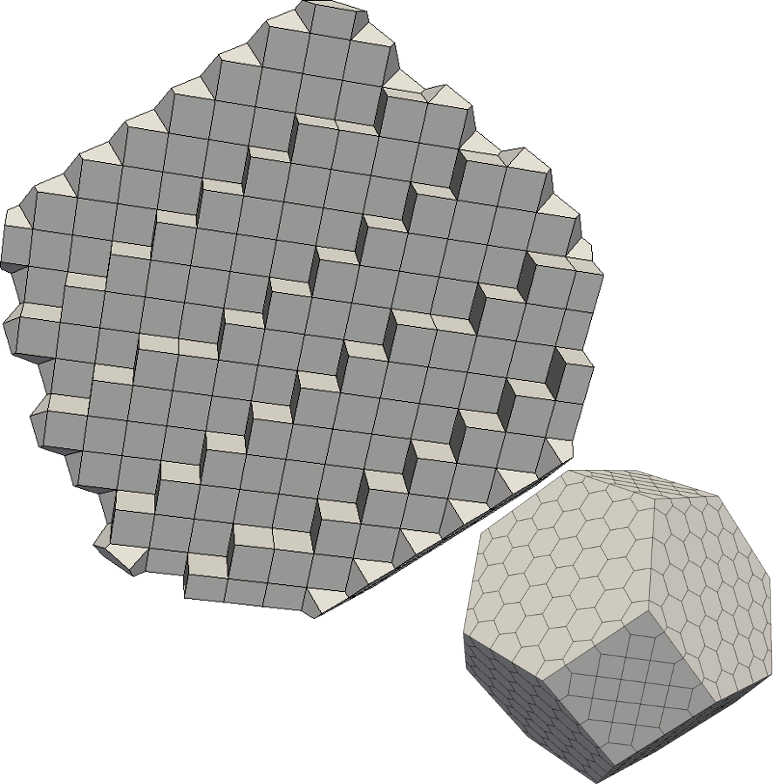} &
\includegraphics[width=0.31\textwidth]{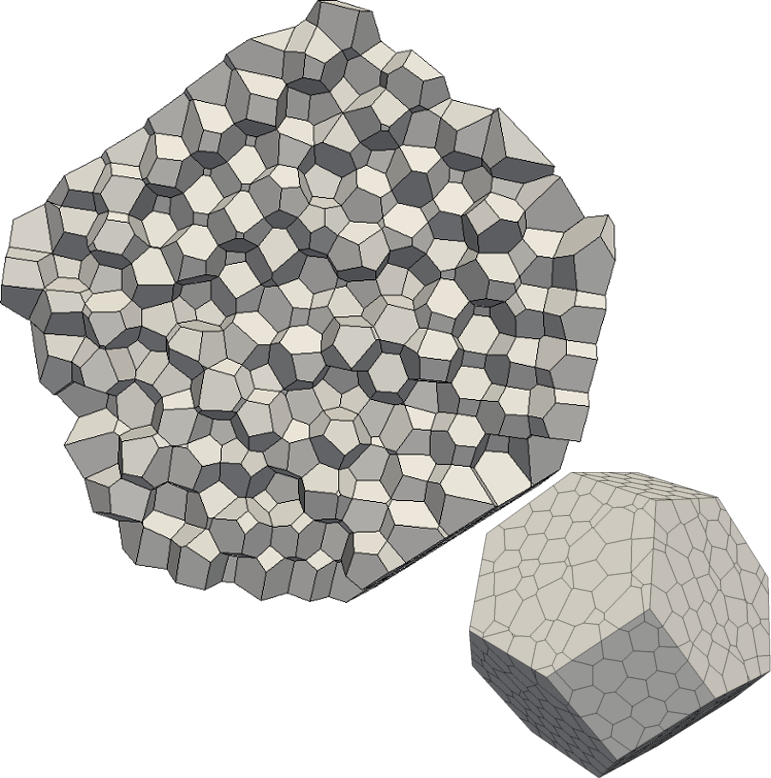}	  &
\includegraphics[width=0.31\textwidth]{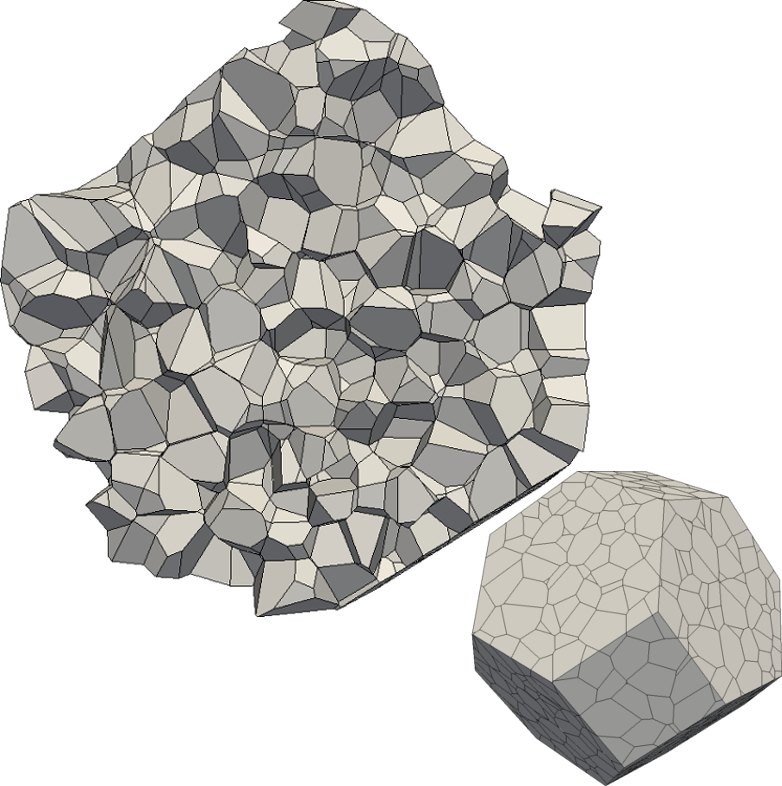} \\
\end{tabular}
\end{center}
\caption{Three different discretizations of the truncated octahedron.}
\label{fig:meshesTO}
\end{figure}
We associate to each mesh a mesh-size $h$ defined as
$$
h:= \frac{1}{N_P}\sum_{i=1}^{N_P} h_P\,,
$$
where $N_P$ is the number of polyhedrons in the mesh and
$h_P$ is the diameter of the polyhedron $P$. 
We compute the $L^2$-error on $\HH$ as
$$
\frac{||\HH - \Pi_0\HH_h||_{0,\,\Omega}}{||\HH||_{0,\,\Omega}}\,,
$$
where $\Pi_0\HH_h$ is the piecewise constant projection of $\HH_h$ defined in \eqref{proj-edge-3D}.
Fig.~\ref{fig:convEse1} (left) shows the convergence rate for each type of mesh.
The slopes are coherent with the theory, see Equation~\eqref{errorL2}.
Moreover, from this graph we can better appreciate the robustness of the VEM with respect to element distortions. 
Indeed, the convergence lines for the three meshes are very close to each other.

\begin{figure}[!htb]
\begin{minipage}{0.5\textwidth}
\begin{center}
\includegraphics[width=1.\textwidth]{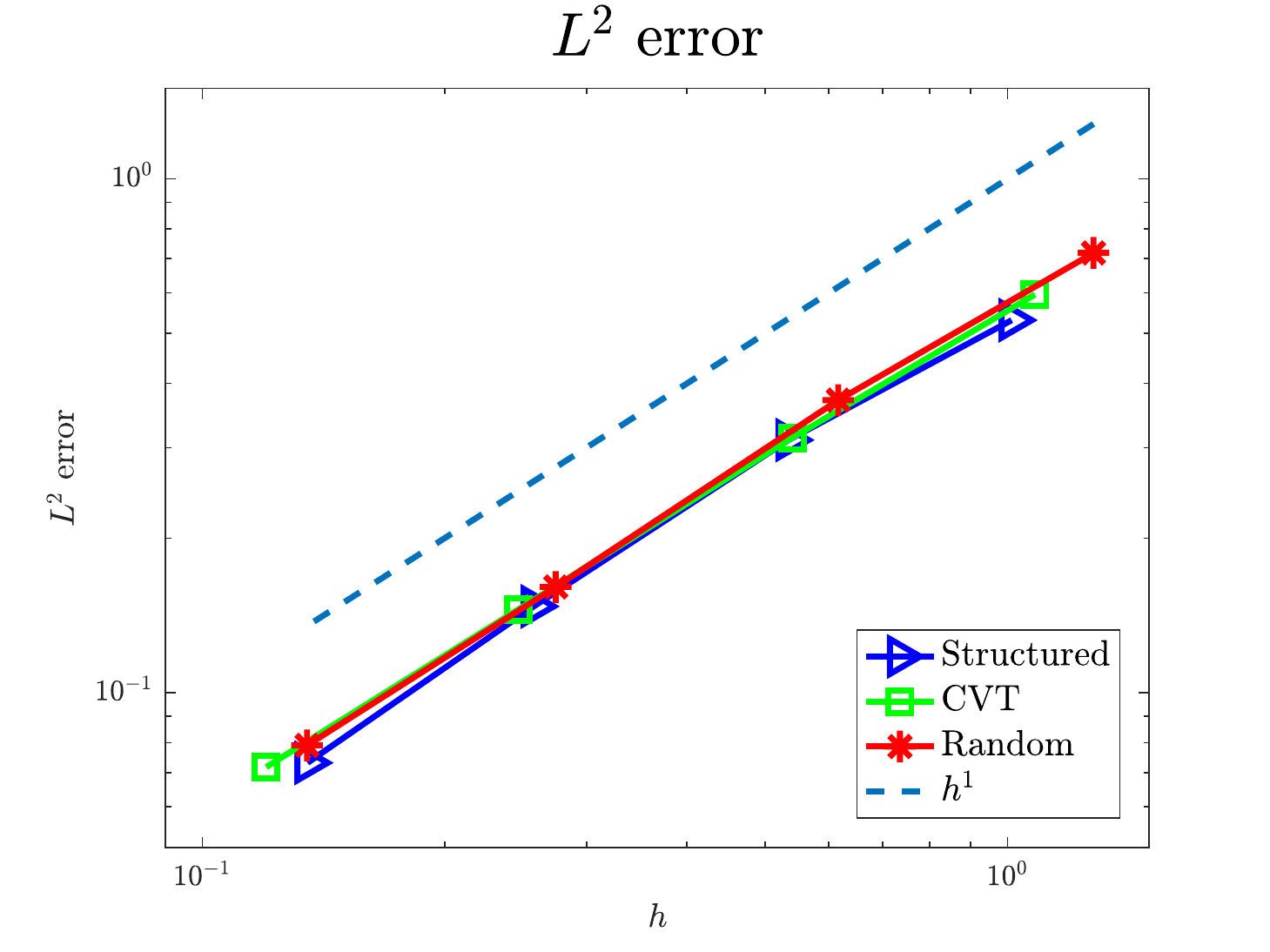}
\end{center}
\end{minipage}%
\begin{minipage}{0.5\textwidth}
\begin{center}
\begin{tabular}{|c|c|c|c|}
\multicolumn{4}{c}{$l^\infty-$norm}\\
\hline
step &Structured &CVT &Random \\
\hline
1 &1.5098e-15 &1.1844e-15 &4.7323e-13 \\ 
2 &7.0101e-16 &2.5902e-14 &1.6107e-12 \\ 
3 &2.6762e-15 &1.0476e-13 &1.8733e-10 \\ 
4 &7.0545e-15 &1.0953e-10 &1.0001e-07 \\ 
\hline
\end{tabular}
\end{center}
\end{minipage}
\caption{Test case 1:  $L^2-$error for $\HH$ (left), and 
$l^\infty-$norm of $dof(p_h)$ (right).}
\label{fig:convEse1}
\end{figure}

{
The results for the Lagrange multiplier  $p_h$ are shown in Fig.~\ref{fig:convEse1} (right). 
As shown in Section~\ref{MS},  the exact solution  $p_h$ of the discrete problem is identically zero. Clearly, the roundoff errors generate, out of the computer, a $p_h$ that is {\it almost} identically zero. In some sense we could then take the value of the {\it computed} $p_h$ as a measure (or, better, a rough indicator) of the conditioning of the final linear system. In particular we see that the Random meshes  generate a worse conditioning.
%%
%This observation motivates also the fact that the Random meshes provide the worst results.
}
\subsection{Test case 2: discontinuous $\mu$ and homogeneous Neumann conditions}

In this subsection we consider an example taken from~\cite{bermudez2008finite}.
The geometry consists in a cylindrical domain with two concentric connected components, 
$\mathcal{S}_1$ and $\mathcal{S}_2$,
separated by a magnetic material, $\mathcal{M}$ (see a cross-section in Fig.~\ref{fig:currentSourgeScheme}).
A current of the same intensity $I=$\num{70000}A
but opposite direction passes along $\mathcal{S}_1$ and 
$\mathcal{S}_2$.
% as shown in the cross-section depicted in Fig.~\ref{fig:currentSourgeScheme}.
%
\begin{figure}[!htb]
\begin{center}
\hspace{2cm}\includegraphics[width=0.4\textwidth]{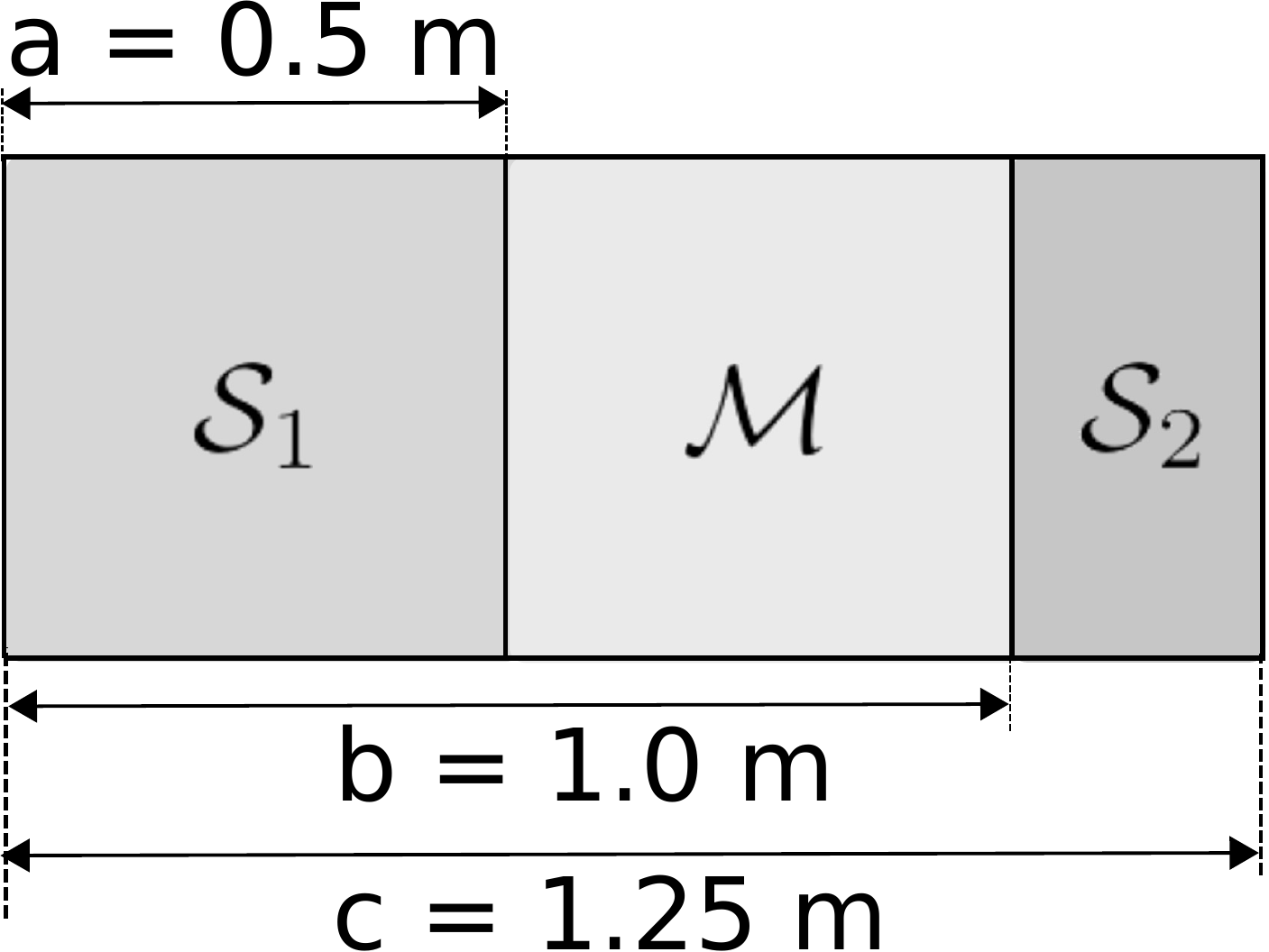}
\end{center}
\caption{Test case 2: meridian cross section of the computational domain.}
\label{fig:currentSourgeScheme}
\end{figure}
We assign the following current intensity (with $\mathbf{e}_z:=(0,\,0,\,1)^t$)
\begin{equation}
\jj(x,\,y,\,z) := \begin{cases}
		  \mathlarger{\frac{I}{\pi a^2}}\,\mathbf{e}_z         &\text{if }\xx\in\mathcal{S}_1\\
		  \mathbf{0}                                           &\text{if }\xx\in\mathcal{M}\\
		  \mathlarger{-\frac{I}{\pi (c^2-b^2)}}\,\mathbf{e}_z  &\text{if }\xx\in\mathcal{S}_2.\\		  
		  \end{cases}
\end{equation}
 We apply homogeneous Neumann boundary conditions and we set
  $\mu_{\mathcal{S}_1}=\mu_{\mathcal{S}_2}=1.0$, $\mu_{\mathcal{M}}=1000.0$. Then
the exact solution is given by 
\begin{equation}
\HH(x,\,y,\,z) := \begin{cases}
		  \mathlarger{\frac{I}{2\pi a^2}}\,\mathbf{e}_\theta                        
		  &\text{if }\xx\in\mathcal{S}_1\\[10pt]
		  \mathlarger{\frac{I}{2\pi r^2}}\,\mathbf{e}_\theta                        
		  &\text{if }\xx\in\mathcal{M}\\[10pt]
		  \left[\mathlarger{-\frac{I}{2\pi(c^2-b^2)}+\frac{1}{r^2}\left(\frac{I}{2\pi}+\frac{Ib^2}{2\pi(c^2-b^2)}\right)}\right]\,\mathbf{e}_\theta
		  &\text{if }\xx\in\mathcal{S}_2\\[10pt]
		  \end{cases}
\end{equation}
where $\mathbf{e}_\theta:=(-y,\,x,\,0)^t$, $r=\sqrt{x^2+y^2}$ and $a=0.5, b=1, c= 1.25$ (see Fig. \ref{fig:currentSourgeScheme}).

To deal with Neumann boundary conditions, problem \eqref{K1_3-dadi} must be modified, looking for $\HH\in H(\bcurl;\Om)$ and $p\in H^1(\Om)$. 
The magnetic field $\HH$ will still be unique, while $p$ will be determined only up to a constant. 
In the code the constant is fixed by requiring the average of the vertex values of $p_h$ to be zero.

%{\color{blue}
%Since the magnetic permeability $\mu$ is not constant,
%this example represents also a numerical validation that
%the theory developed in this article can handle a discontinuous magnetic permeability.}
%
We build two meshes of the cylinder
by extruding two planar two-dimensional  meshes: one made of polygons~\cite{PolyMesher} and 
one made of triangles~\cite{shewchuk1996triangle}.
We refer to the former as voro-extrusion and to the latter as tria-extrusion (for an example of both see Fig.~\ref{fig:meshBer61}).
Then we make two sets of meshes with decreasing mesh-size.
%to numerically verify the trend of the solution.
%In Fig.~\ref{fig:meshBer61}, we show both voro-extrusion and tria-extrusion meshes 
%we use to make the convergence analysis.}
%
\begin{figure}[!htb]
\begin{center}
\begin{tabular}{ccc}
\includegraphics[width=0.4\textwidth]{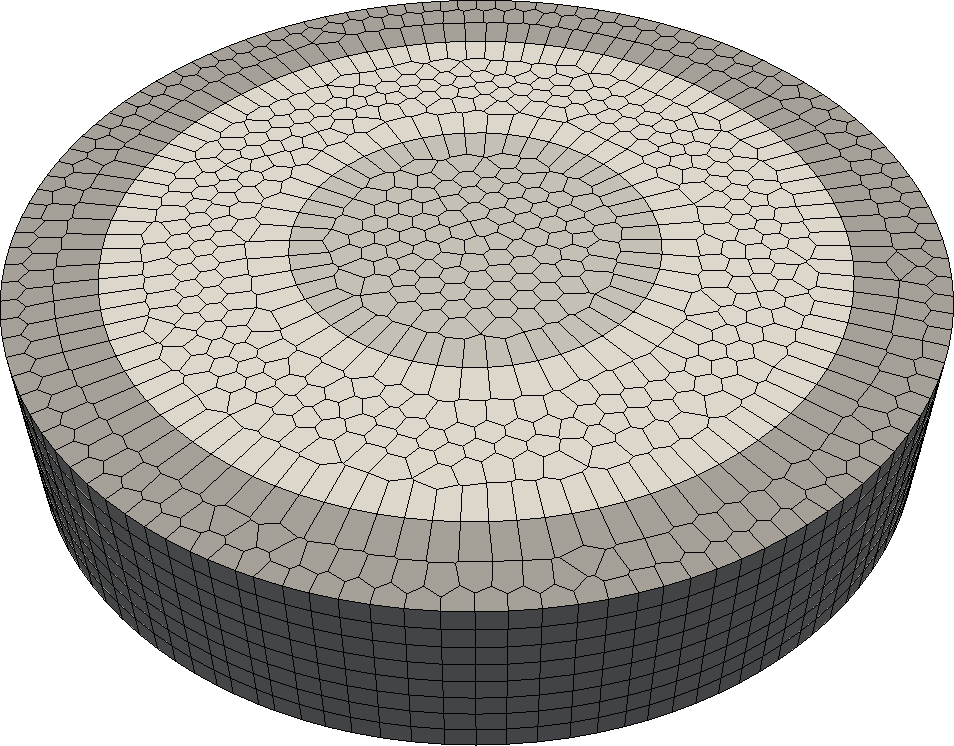} &\phantom{mm} &  
\includegraphics[width=0.4\textwidth]{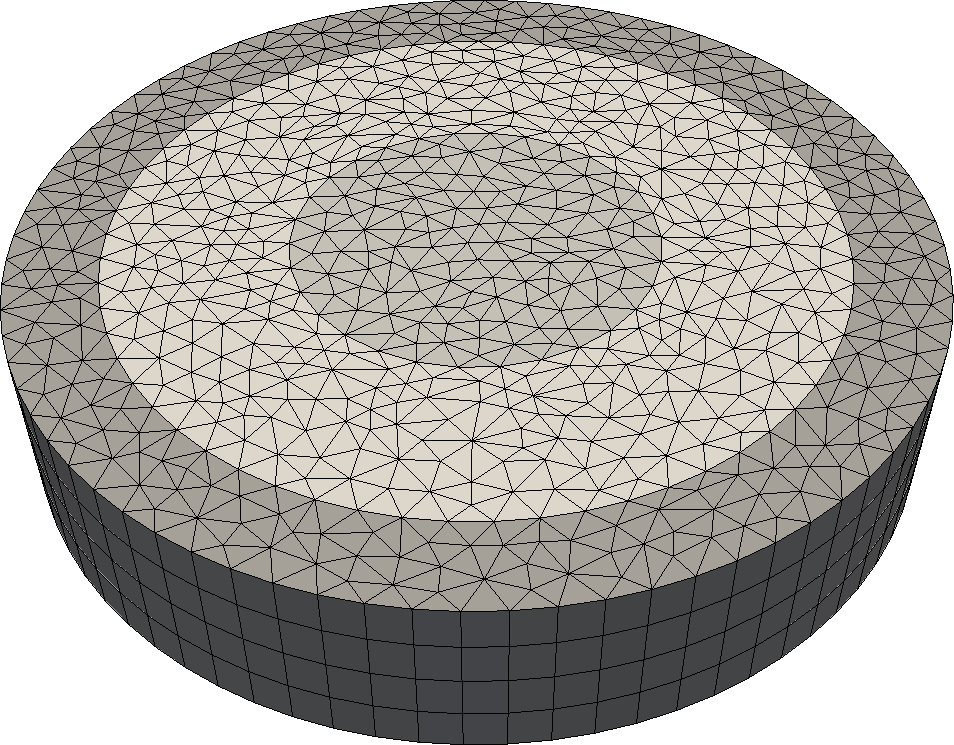}
\end{tabular}
\end{center}
\caption{Test case 2: two polyhedral decompositions  of the cylinder:
voro-extrusion (left), and tria-extrusion (right).}
\label{fig:meshBer61}
\end{figure}
In Fig.~\ref{fig:convEse2} we depict the convergence curves on the left, and on the right
%and in Fig.~\ref{fig:convEse2} right 
we provide the \normConsideredForPh of the vectors $dof(p_h)$.
Both quantities behave as expected:
%for both mesh types.
indeed, we get a convergence rate equal to 1, for the $L^2$ error on the vector field $\HH_h$,
while the values of \normConsideredForPh vanish up to machine algebra errors.
\begin{figure}[!htb]
\begin{minipage}{0.5\textwidth}
\begin{center}
\includegraphics[width=1.\textwidth]{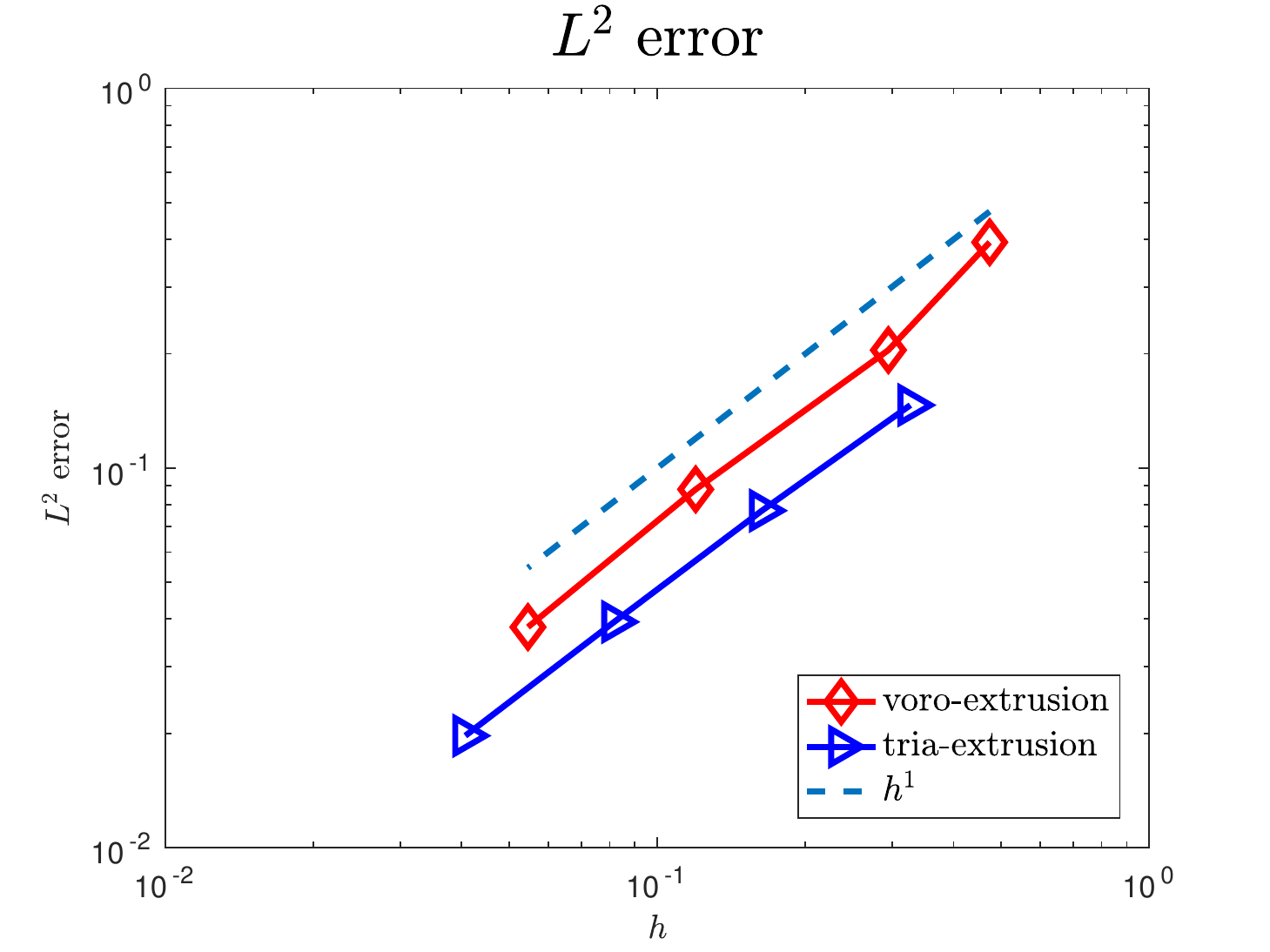}
\end{center}
\end{minipage}%
\begin{minipage}{0.5\textwidth}
\begin{center}
\begin{tabular}{|c|c|c|}
\cline{2-3}
\multicolumn{1}{c|}{}&\multicolumn{2}{c|}{$l^\infty-$norm}\\
\hline
step &voro-extrusion &tria-extrusion \\
\hline
1 &2.1273e-11 &1.3632e-10 \\                                                               
2 &1.0564e-10 &4.1360e-10 \\
3 &1.4887e-10 &1.0027e-09 \\
4 &4.5312e-10 &1.1151e-08 \\
\hline
\end{tabular}
\end{center}
\end{minipage}
\caption{Test case 2:   $L^2$ error for $\HH$ (left), and
$l^\infty-$norm of  $dof(p_h)$ (right).}
\label{fig:convEse2}
\end{figure}
\subsection{Test case 3: a cylindrical electromagnet}

In this subsection we consider a typical benchmark problem,
see e.g. ~\cite{Bermudez-Book,bermudez2008finite,centeradaptive}.
The geometry consists in a ferromagnetic cylindrical core, $\mathcal{C}$, 
surrounded by a toroidal coil with a rectangular cross section, $\mathcal{T}$,
with air, $\mathcal{A}$, around these two structures.
In Fig.~\ref{fig:bermudezScheme} we show a meridian cross section of the domain where 
we specify the dimensions of the cylindrical core, the toroidal coil and the bounding box of the domain.

\begin{figure}[!htb]
\begin{center}
\includegraphics[width=0.35\textwidth]{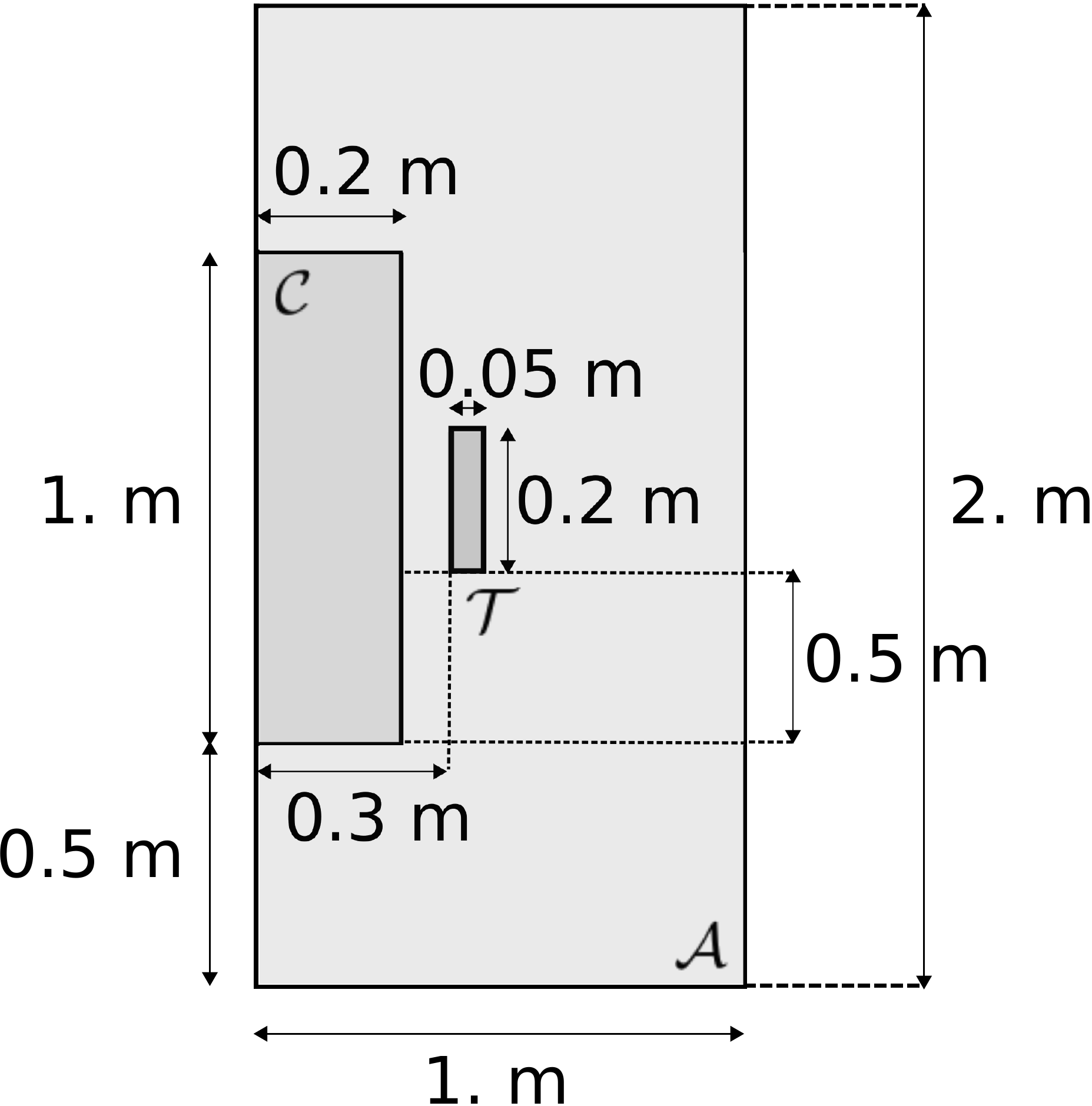}
\end{center}
\caption{Test case 3: meridian cross section of the computational domain.}
\label{fig:bermudezScheme}
\end{figure}

One looks for the magnetic flux generated by a constant current passing along the toroidal coil.
More specifically, we assume  a constant current of 1 Amp\`ere, i.e., 
$$
\jj(x,\,y,\,z) := \begin{cases}
		  \mathlarger{\frac{1}{A|\mathbf{e}_\theta|}}\,\mathbf{e}_\theta     &\text{if }\xx\in\mathcal{T}\\
		  \mathbf{0}                                                         &\text{otherwise}\\
		  \end{cases}
$$
where $\mathbf{e}_\theta=\left(-y,\,x,\,0\right)^t\,$,
$|\mathbf{e}_\theta|$ denotes the norm of $\mathbf{e}_\theta$
and $A$ is the area of the cross section of the coil.
The relative magnetic permeability is taken as:
$\mu_{\mathcal{T}}=1.0$ for the coil,  
$\mu_{\mathcal{C}}=10000.0$ in the  ferromagnetic core, and 
$\mu_{\mathcal{A}}=1.0$ for the air.
Moreover, we suppose that the artificial boundary is sufficiently far 
from both the ferromagnetic cylinder and the toroidal core
so that we can apply the Neumann boundary conditions $\HH\cdot\nn = 0$.

In Fig.~\ref{fig:meshBer62} we show one of the meshes used in this example.
Here too the  meshes  are constructed by extruding two-dimensional ones.
\begin{figure}[!htb]
\begin{center}
\begin{tabular}{ccc}
\includegraphics[height=0.3\textheight]{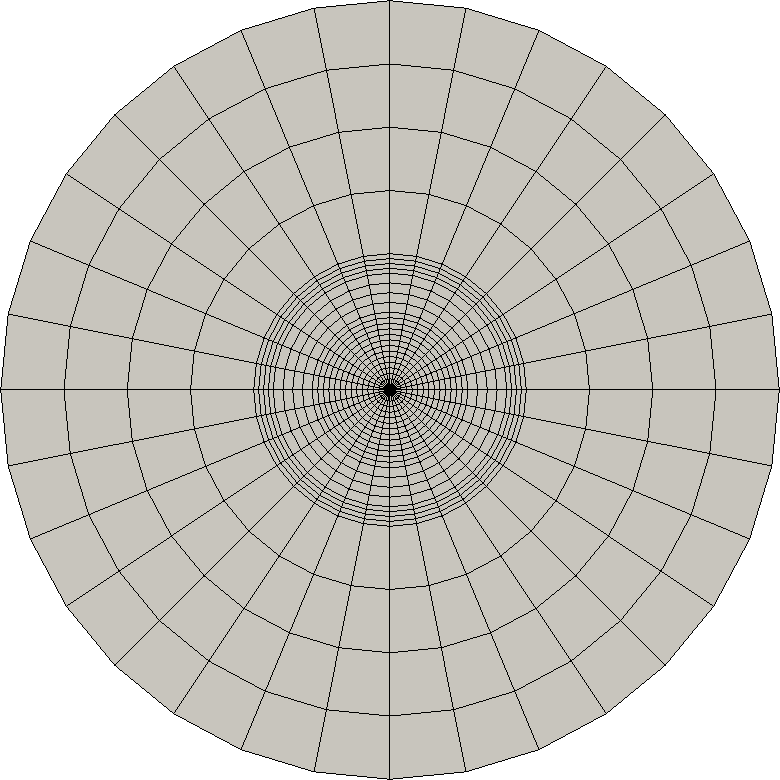}&\phantom{mmm}&
\includegraphics[height=0.3\textheight]{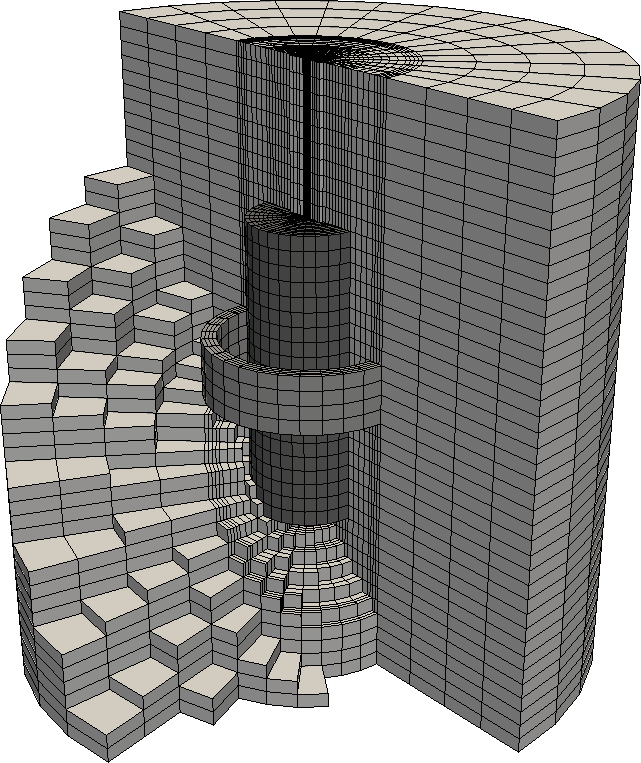}\\
\end{tabular}
\end{center}
\caption{Test case 3:
On the right a clip of the 3d mesh obtained extruding the 2d mesh on the left.}
\label{fig:meshBer62}
\end{figure}
Figs.~\ref{fig:fieldBermudez1} and~\ref{fig:fieldBermudez2} show a qualitative comparison between the 
solution provided in~\cite{Bermudez-Book} (left),
and that obtained by the present method (right).
More specifically, in Fig.~\ref{fig:fieldBermudez1},
we show the modulus of the magnetic flux density, i.e. the modulus of  $\BB=\mu \HH$, inside the cylindrical core. Then, in Fig.~\ref{fig:fieldBermudez2} 
we show $\BB$ along some cross sections of the cylindrical core and the toroidal coil.
In both cases the results of two methods show a comparable behavior.
\begin{figure}[!htb]
\begin{center}
\begin{tabular}{cccc}
\includegraphics[width=0.2\textwidth,height=0.4\textheight]{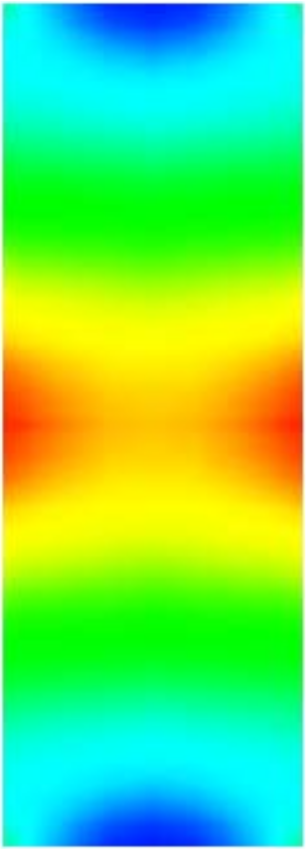}&\phantom{mmmmm}&
\includegraphics[width=0.2\textwidth,height=0.4\textheight]{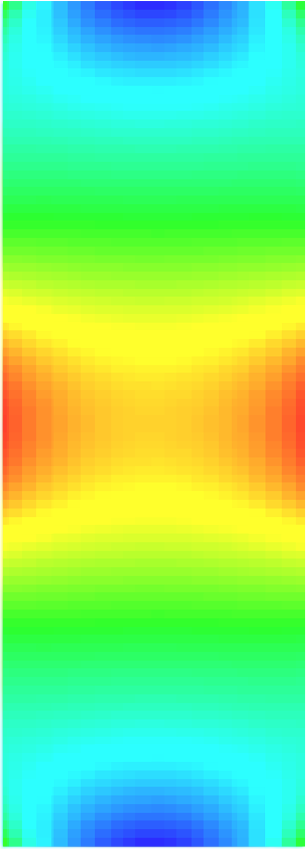}&
\includegraphics[width=0.2\textwidth]{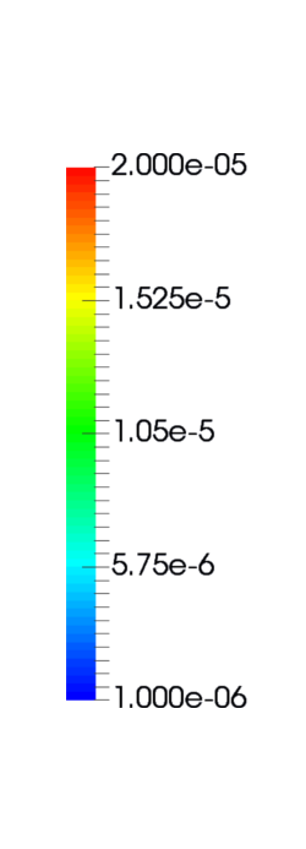}\\
\end{tabular}
\end{center}
\caption{Test case 3: $|\BB|$ in a section of ${\mathcal C}$: from~\cite{Bermudez-Book} (left)
and that from the present approach (right).}
\label{fig:fieldBermudez1}
\end{figure}
\begin{figure}[!htb]
\begin{center}
\begin{tabular}{cccc}
\includegraphics[width=0.3\textwidth]{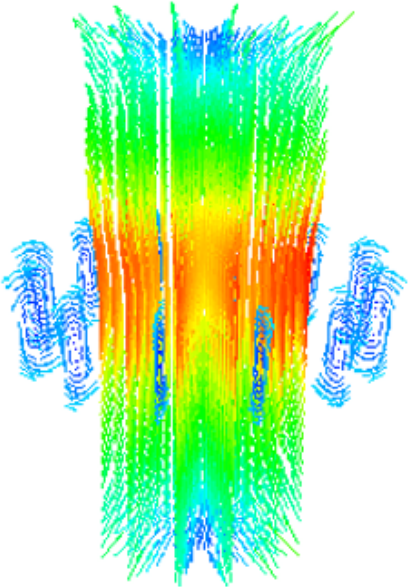}&\phantom{mm}&
\includegraphics[width=0.3\textwidth]{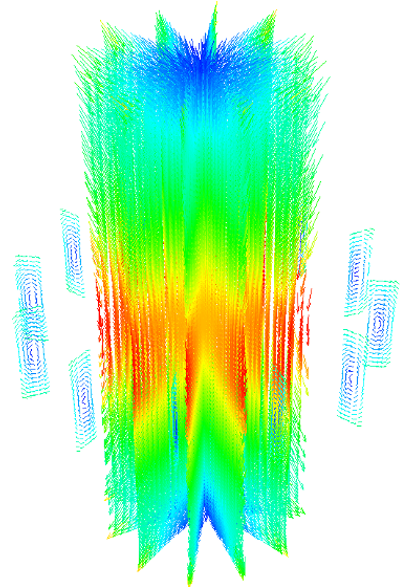}&
\includegraphics[width=0.2\textwidth]{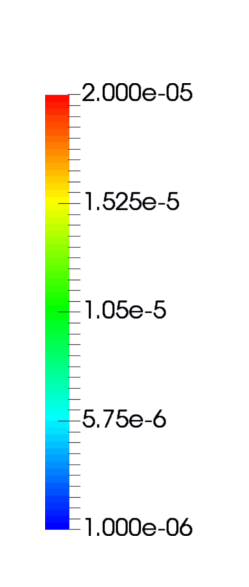}
\end{tabular}
\end{center}
\caption{Test case 3: $\BB$ in a section of  ${\mathcal C}$ and ${\mathcal T}$:
 from~\cite{Bermudez-Book} (left), and  from the present approach (right).}
\label{fig:fieldBermudez2}
\end{figure}

\bigskip

Finally, to have a more quantitative validation of this example,
we compute the so-called magnetic energy 
$$
W:=\int_{\mathcal{D}} \BB\cdot\HH\,\dO=\int_{\mathcal{D}} \mu|\HH|^2\,\dO\,,
$$
for each sub-domain $\mathcal{C}$, $\mathcal{T}$ and $\mathcal{A}$ (see also \eqref{newnorm}).
We consider a sequence of three nested meshes  to verify the convergence rate of these energies. We refer to these meshes as mesh1, mesh2 and mesh3,
the first mesh being the coarsest.

Since we do not have the exact solution of this problem, in order to assess  the accuracy of our results for the magnetic energy $W$
we proceed as in \cite{bermudez2008finite}, i.e.
we consider as exact values of the magnetic energies the values obtained by the software \FLUX on a very fine mesh. The numerical solution of \FLUX follows a scalar potential formulation and exploits the symmetry of the domain via a two-dimensional cylindric coordinate system.
The data are collected in Table~\ref{tab:W} and 
in parenthesis we show the relative error.
The method behaves as expected.

%  and 
%the values of $W$ on each sub-domain converge to the values provided by \FLUX 
%(the relative error is decreasing with the size of the mesh).

\begin{table}[!htb]
\begin{center}
\begin{tabular}{|c|cr|cr|cr|}
\hline
$W$ in                                  &\multicolumn{2}{c|}{$\mathcal{A}$}  
                                        &\multicolumn{2}{c|}{$\mathcal{C}$}
                                        &\multicolumn{2}{c|}{$\mathcal{T}$}\\
\hline
\FLUX	 	  	  	        &\multicolumn{1}{c}{9.09e-07} 
					&\multicolumn{1}{c|}{}
					&\multicolumn{1}{c}{4.73e-10}
					&\multicolumn{1}{c|}{}
					&\multicolumn{1}{c}{3.61e-08}          
					&\multicolumn{1}{c|}{}\\ 
\hline
\hline
mesh1 VEM				&9.70e-07 &(6.7\%)  &7.54e-10 &(59.4\%)  &3.29e-08 &(8.8\%)   \\
mesh2 VEM			        &9.22e-07 &(1.4\%)  &5.53e-10 &(16.9\%)  &3.81e-08 &(5.5\%)   \\
mesh3 VEM			        &9.11e-07 &(0.2\%)  &4.98e-10 &(5.2\%)   &3.75e-08 &(3.8\%)   \\
\hline                                                                                     
\end{tabular}
\end{center}
\caption{Test case 3: Behavior of $W$, in the three regions, computed with the present method}
\label{tab:W}
\end{table}

\section*{Acknowledgments}

The first and third authors were partially supported by the European Research Council through the H2020 Consolidator Grant (grant no. 681162) CAVE -- Challenges and Advancements in Virtual Elements. This support is gratefully acknowledged. 

\bigskip

\bibliographystyle{amsplain}

\bibliography{general-bibliography}

\end{document}